\newtheorem{thm}{Theorem}
\newtheorem{lem}{Lemma}
\title{Analytic Continuation of Multiple Zeta-Functions and\\the Asymptotic Behavior at Non-Positive Integers}
\author{TOMOKAZU ONOZUKA}
\begin{document}
\date{}
\maketitle

\section{Introduction}

The Euler-Zagier multiple zeta function $\zeta_d(s_1, \cdots,s_d)$ is defined by
\begin{equation}
  \zeta_d (s_1,\cdots,s_d) := \sum_{m_1=1}^{\infty}\cdots\sum_{m_d=1}^{\infty}\frac{1}{m_1^{s_1}(m_1+m_2)^{s_2}\cdots(m_1+ \cdots+m_d)^{s_d}}
\label{eq:def}
\end{equation}
where $s_i\ (i=1, \cdots,d)$ are complex variables. Matsumoto \cite{mat1} proved that the series (\ref{eq:def}) is absolutely convergent in
\[
  \left\{(s_1,\cdots,s_d)\in \mathbb{C}^d\ |\ \Re (s_d(d-k+1))>k\ \ (k=1, \cdots,d)\right\}
\]
where $s_d(n)=s_n+s_{n+1}+ \cdots+s_d\ \ (n=1, \cdots,d)$. Akiyama, Egami and Tanigawa \cite{aki1} and Zhao \cite{zhao} proved the meromorphic continuation to the whole space independently. Akiyama, Egami and Tanigawa used the Euler-Maclaurin summation formula and Zhao used generalized functions to prove the analytic continuation. Later, Matsumoto \cite{mat2} also proved the analytic continuation using Mellin-Barnes integral formula. 

The function $\zeta_d(s_1, \cdots,s_d)$ has singularities on
\begin{eqnarray}
\begin{cases}
  s_d=1,\\
  s_{d-1}+s_d=2,1,0,-2,-4, \cdots,\\
  s_d(d-j+1)\in \mathbb{Z}_{\leq j}\ (j=3,4, \cdots ,d),
\end{cases}
\label{eq:sin}
\end{eqnarray}
where $\mathbb{Z}_{\leq j}$ is the set of integers less than or equal to $j$; $\mathbb{Z}_{\geq j}$ is defined similarly. Therefore $(-r_1, \cdots.-r_d)\in \mathbb{Z}^d_{\leq 0}$ lies on the set of singularities. Moreover, it is an indeterminacy of  $\zeta_d (s_1,\cdots,s_d)$. For example, Sasaki \cite{sasa} proved that
\begin{align}
  \lim_{s_3\to 0}\lim_{s_2\to 0}\lim_{s_1\to 0}\zeta_3(s_1,s_2,s_3)=&-\frac{3}{8} \label{eq:exam1},\\
  \lim_{s_1\to 0}\lim_{s_2\to 0}\lim_{s_3\to 0}\zeta_3(s_1,s_2,s_3)=&-\frac{1}{4}. \label{eq:exam2}
\end{align}
Since $(0,0,0)$ is an indeterminacy of $\zeta_3 (s_1,s_2,s_3)$, (\ref{eq:exam1}) and (\ref{eq:exam2}) give different values.

Akiyama, Egami and Tanigawa \cite{aki1} defined the regular values by
\[
  \nonumber\zeta_d(-r_1, \cdots,-r_d):=\lim_{s_1\to -r_1} \cdots \lim_{s_d\to -r_d}\zeta_d(s_1, \cdots,s_d),
\]
and Akiyama and Tanigawa \cite{aki2} considered the reverse and central values given by
\begin{align}
  \nonumber\zeta_d^R(-r_1, \cdots,-r_d)&:=\lim_{s_d\to -r_d} \cdots \lim_{s_1\to -r_1}\zeta_d(s_1, \cdots,s_d),\\
  \nonumber\zeta_d^C(-r_1, \cdots,-r_d)&:=\lim_{\varepsilon\to 0}\zeta_d(-r_1+\varepsilon, \cdots,-r_d+\varepsilon),
\end{align}
respectively. Further, Sasaki \cite{sasa} generalized the regular and reverse values. He defined multiple zeta values for coordinatewise limits by
\[
  \zeta_d(\overset {i_1}{-r_1}, \cdots,\overset{i_d}{-r_d}):=\lim_{\underset{i_j=d}{s_j\to-r_j}} \cdots\lim_{\underset{i_j=1}{s_j\to-r_j}}\zeta_d(s_1, \cdots,s_d),
\]
where $\{i_1, \cdots,i_d\}=\{1, \cdots,d\}$. He obtained all multiple zeta values of depth 3 for coordinatewise limits. In addition, he treated the multiple zeta values of depth 4 for coodinatewise limits in \cite{sasa2}. On the other hand, Kamano \cite{kama} considered the regular, reverse and central values of the multiple Hurwitz zeta funcions. Komori \cite{komo} considered more general multiple zeta functions, and he obtained multiple zeta values at non-positive integers given by
\begin{align}
  \nonumber \zeta_d( \overset{w}{-\mbox{\boldmath $r$}})&=
  \lim_{z_{w^{-1}(d)}\to -r_{w^{-1}(d)}}\cdots\lim_{z_{w^{-1}(1)}\to -r_{w^{-1}(1)}}\zeta_d(z_1, \cdots,z_d),\\
  \nonumber \zeta_d( \underset{\mbox{\boldmath $\theta$}}{-\mbox{\boldmath $r$}})&=
  \zeta_d(\underset{\theta_1}{-r_1},\cdots, \underset{\theta_d}{-r_d})=
  \lim_{\delta\to0}\zeta_d(-r_1+\delta\theta_1,\cdots,-r_d+\delta\theta_d),
\end{align}
where $ -\mbox{\boldmath $r$}=(-r_1, \cdots,-r_d)\in \mathbb{Z}_{\leq0}^d$, $w\in\mathfrak{S}_d$ and $ \mbox{\boldmath $\theta$}=(\theta_1, \cdots,\theta_d)\in \mathbb{C}^d$. To obtain these values by Komori's method, we need to compute generalized multiple Bernoulli numbers.

In the present paper, we prove two theorems. Theorem 1 gives the meromorphic continuation of the multiple zeta function to the whole space. The meromorphic continuation was already proved. Proof of Theorem 1 is similar to the proof of meromorphic continuation in \cite{zhao}. In \cite{zhao}, Zhao used the theory of generalized functions \cite{gel} to prove the meromorphic continuation. On the other hand, to prove Theorem 1, we do not use the theory of generalized functions but integration by parts. In Theorem 2, we prove asymptotic behavior near the non-positive integers. Until now, we have been able to get only 2 kinds of the limit values, $\zeta( \overset{w}{-\mbox{\boldmath $r$}})$ and $\zeta_d( \underset{\mbox{\boldmath $\theta$}}{-\mbox{\boldmath $r$}})$. Using Theorem 2, we can compute not only  $\zeta( \overset{w}{-\mbox{\boldmath $r$}})$, $\zeta_d( \underset{\mbox{\boldmath $\theta$}}{-\mbox{\boldmath $r$}})$ but also various different types of limit values. In fact, by Theorem 2, we can compute, for example,
\begin{equation}
  \lim_{\varepsilon\to 0}\zeta_3(\varepsilon^2,\varepsilon, \varepsilon)=-\frac{1}{3}.\label{eq:exam3}
\end{equation}
This limit value is not contained in the above 2 kinds of values, however by Theorem 2, we can compute this value.

The author would like to express his thanks to Professor Kohji Matsumoto for valuable advice and comments.

\section{Main theorems}

In this section, we state two theorems.

Let $B_m$ be the $m$th Bernoulli number, and $B(x,y)$ be the beta function. For $ (m_1, \cdots , m_d) $$ \in $$ \mathbb{Z}_{\geq0}^d$, $(p_1, \cdots,p_d)$$\in \mathbb{Z}_{\geq0}^d$ and $(\varepsilon_1, \cdots, \varepsilon_d)\in \mathbb{C}^d$, let $m_d(n)$, $p_d(n)$ and $\varepsilon_d(n)$ be $m_n+m_{n+1}+ \cdots+m_d$, $p_n+p_{n+1}+ \cdots+p_d$ and $\varepsilon_n+\varepsilon_{n+1}+ \cdots+\varepsilon_d$ respectively. In addition, Pochhammer symbol $(a)_n$ is defined by $(a)_n:=\Gamma(a+n)/\Gamma(a)$. In this paper, symmetric group $\mathfrak{S}$ is defined by $\bigl\{\sigma|\sigma:\{2, \cdots,d\}\to\{2, \cdots,d\},\ \sigma\ {\rm is\ a\ bijective\ function}\bigr\}$.

\begin{thm}
For $d\geq 2$ and $n_1, \cdots,n_d\in\mathbb{Z}_{\geq 0}$, $\zeta_d(s_1, \cdots,s_d)$ can be continued meromorphically to 
\[
  \left\{(s_1, \cdots,s_d)\in \mathbb{C}^d\ |\ \Re(s_d(j))>d-j-n_j\ (j=1, \cdots,d),\ \Re(s_{j-1})>-n_j-1\ (j=2, \cdots,d)\right\},
\]
and $\zeta_d(s_1, \cdots,s_d)$ can be represented by
\begin{align}
  \nonumber\zeta_d(&s_1, \cdots,s_d)\\
\nonumber=&\frac{1}{\Gamma(s_1)\cdots\Gamma(s_d)}\sum_{k=0}^{n_1}\sum_{p_1+ \cdots+p_d=k}\frac{B_{p_1}\cdots B_{p_d}}{p_1! \cdots p_d!}\frac{1}{s_d(1)-d+k}\prod_{j=2}^{d}B(s_d(j)-d+j+p_d(j)-1,s_{j-1})\\
  \nonumber&+\frac{1}{\Gamma(s_1)\cdots\Gamma(s_d)}\int_0^1 x_1^{s_d(1)-d+n_1}F_{\varphi_a}(x_1)dx_1\\
  &+\frac{1}{\Gamma(s_1)\cdots\Gamma(s_d)}\int_1^{\infty} \frac{x_1^{s_d(1)-d}}{e^{x_1}-1}F_{\psi_a}(x_1)dx_1,
  \label{eq:thm1}
\end{align}
where 
\begin{align}
  \nonumber F_{f_a}&(x_1)\\
  \nonumber:=&\sum_{(a_2, \cdots,a_d)}\sum_{m=1}^d \sum_{\substack{\sigma(2)< \cdots<\sigma(m)\\
  \sigma(m+1)< \cdots<\sigma(d)}}\sum_{k_{\sigma(2)}=0}^{n_{\sigma(2)}} \cdots\sum_{k_{\sigma(m)}=0}^{n_{\sigma(m)}}\left\{\prod_{j=2}^{m}(-1)^{k_{\sigma(j)}}(u_{\sigma(j)}+1)_{k_{\sigma(j)}+1}^{-1}\left(\frac{1}{2}\right)^{u_{\sigma(j)}+k_{\sigma(j)}+1} \right\}\\
  \nonumber&\times\left\{\prod_{j=m+1}^{d}(-1)^{n_{\sigma(j)}+1}(u_{\sigma(j)}+1)_{n_{\sigma(j)}+1}^{-1}\right\}
  \nonumber\int_0^{\frac{1}{2}} \cdots\int_0^{\frac{1}{2}}\left(\prod_{j=m+1}^{d}x_{\sigma(j)}^{u_{\sigma(j)}+n_{\sigma(j)}+1}\right)\\
  &\quad\qquad\left[\frac{d^{k_{\sigma(2)}}}{dx_{\sigma(2)}^{k_{\sigma(2)}}} \cdots\frac{d^{k_{\sigma(m)}}}{dx_{\sigma(m)}^{k_{\sigma(m)}}}\frac{d^{n_{\sigma(m+1)}+1}}{dx_{\sigma(m+1)}^{n_{\sigma(m+1)}+1}}\cdots\frac{d^{n_{\sigma(d)}+1}}{dx_{\sigma(d)}^{n_{\sigma(d)}+1}}
f_a(x_1, \cdots,x_d)\right]_{\substack{x_{\sigma(2)}=\frac{1}{2}\\ \vdots\\x_{\sigma(m)}=\frac{1}{2}}}\nonumber
  \nonumber  dx_{\sigma(m+1)} \cdots dx_{\sigma(d)},
\end{align}

\begin{align}
\nonumber\varphi_a(x_1, \cdots,x_d)&:=\left(\prod_{j=2}^{d}(1-x_j)^{v_j}\right)
  \sum_{k=n_1+1}^{\infty}\sum_{p_1+ \cdots+p_d=k}\frac{B_{p_1} \cdots B_{p_d}}{p_1! \cdots p_d!}x_1^{k-n_1-1}t_2^{p_d(2)} \cdots t_d^{p_d(d)},\\
\nonumber\psi_a(x_1, \cdots,x_d)&:=\prod_{j=2}^{d}(1-x_J)^{v_j}\frac{x_1t_2 \cdots t_j}{e^{x_1t_2 \cdots t_j}-1}.
\end{align}
Here, the summation $\sum_{(a_2, \cdots,a_d)}$ runs all combinations of $a_j=0$ or $1\ (j=2, \cdots,d)$, and $\sum_{\substack{\sigma(2)< \cdots<\sigma(m)\\ \sigma(m+1)< \cdots<\sigma(d)}}$ runs all $\sigma\in\mathfrak{S}$ satisfying $\sigma(2)< \cdots<\sigma(m)$ and $\sigma(m+1)< \cdots<\sigma(d)$, and $u_j$, $v_j$ and $t_j$ are defined by
\begin{align}
\nonumber u_j&:=\left\{ \begin{array}{ll}
s_d(j)-d+j-2&(a_j=0),\\
s_{j-1}-1&(a_j=1),\\
\end{array} \right.
\nonumber &v_j:=\left\{ \begin{array}{ll}
s_{j-1}-1&(a_j=0),\\
s_d(j)-d+j-2&(a_j=1),\\
\end{array} \right.\\
\nonumber t_j&:=\left\{ \begin{array}{ll}
x_j&(a_j=0),\\
1-x_j&(a_j=1).\\
\end{array} \right.
\end{align}
The function $\zeta_d(s_1, \cdots,s_d)$ has possible singularities on
\[
  \left\{(s_1, \cdots,s_d)\in \mathbb{C}^d\ |\ s_d(j)\in \mathbb{Z}_{\leq d-j+1},\ s_j\in \mathbb{Z}_{\leq 0}\ (j=1, \cdots,d)\right\}.
\]
\end{thm}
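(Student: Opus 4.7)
The starting point would be the classical Mellin-type integral representation. In the region of absolute convergence, use $(m_1+\cdots+m_j)^{-s_j}=\Gamma(s_j)^{-1}\int_0^\infty\xi_j^{s_j-1}e^{-(m_1+\cdots+m_j)\xi_j}d\xi_j$ in each factor, interchange sum and integral, and perform the geometric summations in $m_1,\ldots,m_d$. After the substitution $y_j=\xi_j+\xi_{j+1}+\cdots+\xi_d$ (so $y_d<y_{d-1}<\cdots<y_1$), we obtain
\[
\zeta_d(s_1,\ldots,s_d)=\frac{1}{\Gamma(s_1)\cdots\Gamma(s_d)}\int\!\!\!\cdots\!\!\!\int_{0<y_d<\cdots<y_1}\prod_{j=1}^{d}(y_j-y_{j+1})^{s_j-1}\prod_{j=1}^{d}\frac{1}{e^{y_j}-1}\,dy_1\cdots dy_d,
\]
with $y_{d+1}=0$. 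To pull the inner variables into a bounded domain, rescale by setting $y_1=x_1$ and $y_j=x_1\,t_2t_3\cdots t_j$ for $j\ge 2$ with $t_j\in[0,1]$; the first factor becomes a pure power of $x_1$ of exponent $s_d(1)-d$, and each $t_j$ contributes a Beta-type factor. Splitting each $t_j$-integral at $1/2$ and applying the involution $t_j\mapsto 1-t_j$ on the upper half accounts for the $2^{d-1}$ choices $a_j\in\{0,1\}$ and defines the variables $u_j,v_j,t_j$ in the statement.

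Next, split the outer $x_1$-integral as $\int_0^1+\int_1^\infty$. On $(1,\infty)$ the factor $\frac{y_1}{e^{y_1}-1}$ decays exponentially, so that integral only needs to be absolutely convergent for $x_2,\ldots,x_d$, which motivates the definition of $\psi_a$. On $(0,1)$ expand each $\frac{x_1 t_2\cdots t_j}{e^{x_1 t_2\cdots t_j}-1}=\sum_{p_j\ge 0}\frac{B_{p_j}}{p_j!}(x_1 t_2\cdots t_j)^{p_j}$ and isolate the terms of total degree $k=p_1+\cdots+p_d\le n_1$ in $x_1$; integrating these polynomial-in-$x_j$ pieces termwise against $t_j^{u_j}(1-t_j)^{v_j}$ on $[0,1]$ produces Beta functions $B(s_d(j)-d+j+p_d(j)-1,s_{j-1})$, yielding the first sum in (\ref{eq:thm1}). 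The remainder of degree $>n_1$ in $x_1$ is collected into $\varphi_a$.

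The key analytic step is then integration by parts in each variable $x_{\sigma(j)}$, $j=2,\ldots,d$. For $\varphi_a$ and $\psi_a$ one integrates by parts $n_j+1$ times in $x_j$, using the antiderivative of $t_j^{u_j}$ which produces the Pochhammer factor $(u_j+1)_{n_j+1}^{-1}$. At each stage either one stops after $k_j\le n_j$ steps and substitutes $x_j=1/2$ (contributing to the boundary terms, the indices $\sigma(2),\ldots,\sigma(m)$), or one performs the full $n_j+1$ integrations and retains an integral over $[0,1/2]$ (the indices $\sigma(m{+}1),\ldots,\sigma(d)$). Arranging all possibilities by the partition $\{2,\ldots,d\}=\{\sigma(2)<\cdots<\sigma(m)\}\sqcup\{\sigma(m+1)<\cdots<\sigma(d)\}$ gives the stated expression for $F_{f_a}$; the repeated integrations by parts each shift $u_j$ by $1$, whence the Pochhammer factors and the factors $(1/2)^{u_j+k_j+1}$ on the boundary. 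This is exactly the source of the meromorphic continuation: each integration by parts lowers the order of pole in $s_d(j)$ by one.

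The main obstacle is the combinatorial bookkeeping: tracking which terms are polynomial in $x_1$ (and contribute to the Beta-function sum) versus which get folded into $\varphi_a$, and verifying that the order of the integrations by parts in the different $x_{\sigma(j)}$ can be interchanged freely so that the final expression is symmetric in the choice of $\sigma$. Once the representation (\ref{eq:thm1}) is established, holomorphy of the last two integrals in the advertised region follows because the repeated integrations by parts ensure each $t_j$-integrand is bounded by $t_j^{\Re u_j+n_j+1}$ with $\Re u_j+n_j+1>-1$, and the exponential factor controls $x_1\to\infty$. The possible singularities come solely from the Gamma-functions in the denominator and the denominator $s_d(1)-d+k$ and the Beta functions in the finite sum, which yield exactly the locus $s_d(j)\in\mathbb{Z}_{\le d-j+1}$ and $s_j\in\mathbb{Z}_{\le 0}$.
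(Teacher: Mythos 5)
Your proposal is correct and follows essentially the same route as the paper: the paper starts from Zhao's integral representation (which you re-derive via the Mellin/geometric-series argument rather than citing), splits the outer integral at $1$, expands the Bernoulli generating function and isolates the degree $\le n_1$ terms to get the Beta-function sum, and then obtains the continuation by splitting each inner integral at $1/2$ and integrating by parts $n_j+1$ times in each $x_j$, which is exactly the content of its Lemmas 1 and 2 (with Lemmas 3--5 supplying the smoothness and boundedness of $\varphi_a$ and $\psi_a$ that justify the parts-integrations and the convergence in the enlarged region). The combinatorial bookkeeping you flag as the main obstacle is precisely what the paper's inductions in Lemmas 1 and 2 carry out.
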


Using Theorem 1, we can obtain the following Theorem 2.

\begin{thm}
Suppose that $\varepsilon_j \neq 0,\ \varepsilon_d(j) \neq0\ (j=1, \cdots,d)$, $| \varepsilon_1|+ \cdots+|\varepsilon_d|\leq\frac{1}{2}$ and $ |\varepsilon_k/ \varepsilon_d(j)|\ll1$ as $(\varepsilon_1, \cdots, \varepsilon_d)\to(0, \cdots,0)\  (j=1, \cdots,d,\ k=j, \cdots,d)$. Then for $m_j\in \mathbb{Z}_{\geq0}\ (j=1, \cdots,d)$, we have
\begin{align}
  \nonumber&\zeta_d(-m_1+ \varepsilon_1, \cdots,-m_d+ \varepsilon_d)=(-1)^{m_d}m_d! \\
  \nonumber&\times\sum_{p_1+ \cdots+p_d=d+M}\frac{B_{p_1} \cdots B_{p_d}}{p_1! \cdots p_d!}\prod_{j=2}^{d}h(-m_d(j)-d+j+p_d(j)-1,-m_d(j-1)-d+j+p_d(j)-1)\\
  \nonumber&\times \frac{[ \varepsilon_d(j)]_{-m_d(j)-d+j+p_d(j)-1}}{[ \varepsilon_d(j-1)]_{-m_d(j-1)-d+j+p_d(j)-1}}+\sum_{j=1}^{d}O( \varepsilon_j)\\
  \nonumber=&(-1)^{m_d}m_d! \sum_{\substack{p_1+ \cdots+p_d=d+M\\\vspace{-1mm} \\-m_d(j)-d+j+p_d(j)<2\  \mbox{or}\\-m_d(j-1)-d+j+p_d(j)\geq2\ (2\leq\forall j\leq d)}}
  \frac{B_{p_1} \cdots B_{p_d}}{p_1! \cdots p_d!}\prod_{j=2}^{d} \frac{[ \varepsilon_d(j)]_{-m_d(j)-d+j+p_d(j)-1}}{[ \varepsilon_d(j-1)]_{-m_d(j-1)-d+j+p_d(j)-1}}+\sum_{j=1}^{d}O( \varepsilon_j)
\end{align}
as $( \varepsilon_1, \cdots, \varepsilon_d)\to(0, \cdots,0)$, where
\begin{align}
  \nonumber M&:=m_1+ \cdots+m_d,\\
  \nonumber[a]_n&:=\left\{ \begin{array}{ll}
    a(n-1)!&(n\geq1),\\
    (-1)^n(-n)!^{-1}&(n<0),\\
  \end{array} \right.\\
  \nonumber h(m,n)&:=\left\{ \begin{array}{ll}
    0&(m\geq1>n),\\
    1&(otherwise).\\
  \end{array} \right.
\end{align}
\end{thm}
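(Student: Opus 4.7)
The plan is to apply Theorem~1 at $(s_1,\ldots,s_d)=(-m_1+\varepsilon_1,\ldots,-m_d+\varepsilon_d)$ with, say, $n_j=d+M$ for every $j$, so that the representation (\ref{eq:thm1}) is valid in a full neighborhood of $(-m_1,\ldots,-m_d)$. The two integral terms on the right of (\ref{eq:thm1}) are holomorphic there, whereas the scalar prefactor $1/(\Gamma(s_1)\cdots\Gamma(s_d))$ vanishes like $\prod_{j}(-1)^{m_j}m_j!\,\varepsilon_j$, so those integrals contribute $O(\varepsilon_1\cdots\varepsilon_d)$, which is absorbed in $\sum_jO(\varepsilon_j)$. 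All the asymptotic content therefore lies in the finite Beta-function sum of (\ref{eq:thm1}).

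For that finite sum, I would use $B(u_j,v_j)=\Gamma(u_j)\Gamma(v_j)/\Gamma(u_j+v_j)$ with $v_j=s_{j-1}$. The resulting product $\prod_{j=2}^{d}\Gamma(s_{j-1})$ cancels against $\prod_{i=1}^{d-1}1/\Gamma(s_i)$ in the prefactor, leaving
\[
  \frac{1}{\Gamma(s_d)}\sum_{k,\,p}\frac{B_{p_1}\cdots B_{p_d}}{p_1!\cdots p_d!}\,\frac{1}{s_d(1)-d+k}\prod_{j=2}^{d}\frac{\Gamma(s_d(j)-d+j+p_d(j)-1)}{\Gamma(s_d(j-1)-d+j+p_d(j)-1)},
\]
since $u_j+v_j=s_d(j-1)-d+j+p_d(j)-1$. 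After substituting $s_j=-m_j+\varepsilon_j$, the factor $1/(s_d(1)-d+k)=1/(\varepsilon_d(1)+(k-d-M))$ is singular at $\varepsilon=0$ only when $k=d+M$. Any other $k$ in the range $0\le k\le n_1$ produces a bounded factor which, multiplied by the $O(\varepsilon_d)$ from $1/\Gamma(s_d)$ and by the gamma-quotient product (kept bounded by $|\varepsilon_k/\varepsilon_d(j)|\ll 1$), contributes only to $\sum_jO(\varepsilon_j)$.

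For the surviving $k=d+M$ term, set $A_j:=-m_d(j)-d+j+p_d(j)-1$ and $B_j:=-m_d(j-1)-d+j+p_d(j)-1$, so that the gamma arguments become $A_j+\varepsilon_d(j)$ and $B_j+\varepsilon_d(j-1)$. The uniform Laurent expansion
\[
  \Gamma(A+\varepsilon)=\frac{[\varepsilon]_A}{\varepsilon}\bigl(1+O(\varepsilon)\bigr),
\]
valid for every integer $A$ (with the natural convention $[\varepsilon]_0=1$), yields
\[
  \frac{\Gamma(A_j+\varepsilon_d(j))}{\Gamma(B_j+\varepsilon_d(j-1))}=\frac{[\varepsilon_d(j)]_{A_j}}{[\varepsilon_d(j-1)]_{B_j}}\cdot\frac{\varepsilon_d(j-1)}{\varepsilon_d(j)}\,(1+O(\varepsilon)).
\]
Taking the product over $j=2,\ldots,d$ telescopes the ratios $\varepsilon_d(j-1)/\varepsilon_d(j)$ to $\varepsilon_d(1)/\varepsilon_d$; this cancels exactly against the $1/\varepsilon_d(1)$ from the $k=d+M$ pole factor and against the $\varepsilon_d$ in $1/\Gamma(s_d)=(-1)^{m_d}m_d!\,\varepsilon_d\,(1+O(\varepsilon_d))$, leaving the overall prefactor $(-1)^{m_d}m_d!$ and the product $\prod_{j=2}^{d}[\varepsilon_d(j)]_{A_j}/[\varepsilon_d(j-1)]_{B_j}$ claimed in the theorem.

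The passage to the second displayed form is then a rewriting: whenever $h(A_j,B_j)=0$, i.e.\ $A_j\ge 1$ and $B_j\le 0$, one has $[\varepsilon_d(j)]_{A_j}=\varepsilon_d(j)(A_j-1)!$ while $[\varepsilon_d(j-1)]_{B_j}$ is a nonzero constant, so the entire term is $O(\varepsilon_d(j))$ and may be moved into $\sum_jO(\varepsilon_j)$; the surviving condition $A_j\le 0$ or $B_j\ge 1$ is precisely $-m_d(j)-d+j+p_d(j)<2$ or $-m_d(j-1)-d+j+p_d(j)\ge 2$. I expect the main obstacle to be the uniform error bookkeeping: showing simultaneously that every off-diagonal ($k\ne d+M$) contribution and every discarded $h=0$ contribution is genuinely $O(\varepsilon_j)$ for some $j$ requires using $|\varepsilon_k/\varepsilon_d(j)|\ll 1$ to control all the telescoping quotients $\varepsilon_d(j-1)/\varepsilon_d(j)$ and the gamma quotients arising from non-principal $p$-tuples, which is precisely where the technical hypothesis on the $\varepsilon_j$ is consumed.
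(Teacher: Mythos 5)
Your overall strategy coincides with the paper's: apply Theorem~1 at $s_j=-m_j+\varepsilon_j$ with all $n_j=M+d$, discard the two integral terms as errors, and extract the main term from the $k=d+M$ summand of the Beta-function sum via the pole of $1/(s_d(1)-d+k)$ together with Gamma/Pochhammer asymptotics. Your treatment of that main term is essentially the paper's: the expansion $\Gamma(A+\varepsilon)=\varepsilon^{-1}[\varepsilon]_A(1+O(\varepsilon))$ is the content of the paper's Lemma~6, and your telescoping of the quotients $\varepsilon_d(j-1)/\varepsilon_d(j)$ against $1/\Gamma(s_d)$ reproduces the paper's factor $1/\bigl((\varepsilon_d)_{-m_d}\Gamma(\varepsilon_d(1))\bigr)$. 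That part, and the passage to the second displayed form via the definition of $h$, is sound.

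The gap is in your first step. The two integral terms are \emph{not} holomorphic in a full neighborhood of $(-m_1,\ldots,-m_d)$: the functions $F_{\varphi_a}$ and $F_{\psi_a}$ carry the factors $(u_{\sigma(j)}+1)_{k_{\sigma(j)}+1}^{-1}$ and $(u_{\sigma(j)}+1)_{n_{\sigma(j)}+1}^{-1}$, whose poles lie on the hyperplanes $s_d(j)\in\mathbb{Z}_{\leq d-j+1}$ and $s_{j-1}\in\mathbb{Z}_{\leq 0}$ passing through the target point (this is exactly why Theorem~1 lists those loci as possible singularities). With $n_{\sigma(j)}+1=M+d+1$ the product $(u_j+1)(u_j+2)\cdots(u_j+n_j+1)$ always contains one factor equal, up to a nonzero constant, to $\varepsilon_d(j)$ or $\varepsilon_{j-1}$, so these terms blow up like $\prod_{j=2}^{d}w_j^{-1}$ with $w_j\in\{\varepsilon_d(j),\varepsilon_{j-1}\}$. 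Multiplying by $1/(\Gamma(s_1)\cdots\Gamma(s_d))\ll\varepsilon_1\cdots\varepsilon_d$ therefore gives $O\bigl(\varepsilon_d\prod_{j=2}^{d}\varepsilon_{j-1}/w_j\bigr)$, not $O(\varepsilon_1\cdots\varepsilon_d)$, and one must invoke the hypothesis $|\varepsilon_k/\varepsilon_d(j)|\ll 1$ to control the surviving ratios $\varepsilon_{j-1}/\varepsilon_d(j)$ and conclude that these contributions are $\sum_j O(\varepsilon_j)$. So the technical hypothesis is consumed here, in the integral remainders, at least as essentially as in the Beta-sum bookkeeping you anticipated; as written, your argument would dispose of the integral terms without using that hypothesis at all, which cannot be right since those terms genuinely diverge along the singular hyperplanes through $(-m_1,\ldots,-m_d)$.
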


In Theorem 2, $\varepsilon_j \ (j=1, \cdots,d)$ should satisfy  $ |\varepsilon_k/ \varepsilon_d(j)|\ll1\ (j=1, \cdots,d,\ k=j, \cdots,d)$. Let us think about this condition. If $ |\varepsilon_k/ \varepsilon_d(j)|\to\infty$, then $\varepsilon_d(j)$ tends to $0$ rapidly. By (\ref{eq:sin}), $s_j+ \cdots+s_d=-M$ is a singular locus. Therefore, when $|\varepsilon_k/ \varepsilon_d(j)|\to\infty$, the point $(-m_1+ \varepsilon_1, \cdots,-m_d+ \varepsilon_d)$ approximates asymptotically to a singular locus. Hence,  $ |\varepsilon_k/ \varepsilon_d(j)|\ll1$ means geometrically that $(-m_1+ \varepsilon_1, \cdots,-m_d+ \varepsilon_d)$ does not approximate asymptotically to a singular locus.

\section{Examples}
By Theorem 2, we can compute various multiple zeta values at non-positive integers. Let us see some examples.

In the case $d=2$, we have
\begin{align}
  \nonumber \zeta_2( \varepsilon_1, \varepsilon_2)&=\frac{1}{3}+\frac{1}{24}\cdot\frac{ \varepsilon_2}{ \varepsilon_1+ \varepsilon_2}+\sum_{j=1}^{2}O( \varepsilon_j),\\
  \nonumber \zeta_2(-1+ \varepsilon_1, \varepsilon_2)&=\frac{1}{24}+\sum_{j=1}^{2}O( \varepsilon_j),\\
  \nonumber \zeta_2( \varepsilon_1, -1+\varepsilon_2)&=\frac{1}{12}+\sum_{j=1}^{2}O( \varepsilon_j),\\
  \nonumber \zeta_2(-1+ \varepsilon_1,-1+ \varepsilon_2)&=\frac{1}{360}+\frac{1}{720}\cdot\frac{ \varepsilon_2}{ \varepsilon_1+ \varepsilon_2}+\sum_{j=1}^{2}O( \varepsilon_j).
\end{align}

In the case $d=3$, we have
\begin{align}
  \nonumber \zeta_3( \varepsilon_1, \varepsilon_2, \varepsilon_3)&=-\frac{1}{4}-\frac{1}{24}\cdot\frac{ \varepsilon_3}{ \varepsilon_2+ \varepsilon_3}-\frac{1}{24}\cdot\frac{ \varepsilon_2+2\varepsilon_3}{ \varepsilon_1+\varepsilon_2+ \varepsilon_3}+\sum_{j=1}^{3}O( \varepsilon_j),\\
  \nonumber \zeta_3(-1+ \varepsilon_1, \varepsilon_2, \varepsilon_3)&=-\frac{17}{720}-\frac{1}{144}\cdot\frac{ \varepsilon_3}{ \varepsilon_2+ \varepsilon_3}+\frac{1}{720}\cdot\frac{ -\varepsilon_2+3\varepsilon_3}{ \varepsilon_1+\varepsilon_2+ \varepsilon_3}+\sum_{j=1}^{3}O( \varepsilon_j),\\
  \nonumber \zeta_3( \varepsilon_1, -1+\varepsilon_2, \varepsilon_3)&=-\frac{19}{360}+\frac{1}{360}\cdot\frac{ \varepsilon_2}{\varepsilon_1+ \varepsilon_2+ \varepsilon_3}+\sum_{j=1}^{3}O( \varepsilon_j),\\
  \nonumber \zeta_3( \varepsilon_1, \varepsilon_2, -1+\varepsilon_3)&=-\frac{3}{40}-\frac{1}{720}\cdot\frac{4 \varepsilon_2+3\varepsilon_3}{\varepsilon_1+ \varepsilon_2+ \varepsilon_3}+\sum_{j=1}^{3}O( \varepsilon_j).
\end{align}
Note that the example (\ref{eq:exam3}) comes from the first example of the above, taking $ \varepsilon_1= \varepsilon^2$ and $ \varepsilon_2= \varepsilon_3= \varepsilon$.

In the case $d=4$, we have
\begin{align}
  \nonumber \zeta_4( \varepsilon_1, \varepsilon_2, \varepsilon_3, \varepsilon_4)=
  \frac{1}{5}+\frac{1}{36}\cdot\frac{ \varepsilon_4}{ \varepsilon_3+ \varepsilon_4}&+\frac{1}{48}\cdot\frac{ \varepsilon_3+2\varepsilon_4}{\varepsilon_2+ \varepsilon_3+ \varepsilon_4}+\frac{1}{720}\cdot\frac{ 19\varepsilon_2+ 33\varepsilon_3+52\varepsilon_4}{\varepsilon_1+\varepsilon_2+ \varepsilon_3+ \varepsilon_4}\\
  \nonumber&+\frac{1}{144}\cdot\frac{ \varepsilon_4(\varepsilon_2+\varepsilon_3+ \varepsilon_4)}{ (\varepsilon_3+\varepsilon_4)(\varepsilon_1+\varepsilon_2+ \varepsilon_3+ \varepsilon_4)}+\sum_{j=1}^{4}O( \varepsilon_j).
\end{align}

\section{Lemmas}

To prove Theorem 1 and Theorem 2, in this section, we prove several lemmas. In this section, suppose that $\Re(s_k)>1\ (k=1, \cdots,d)$.

\begin{lem}

Let $I_1$ be an interval on $ \mathbb{R}$ and $f(x_1, \cdots,x_d)$ be of class $C^{\infty}$ on $I_1\times [0,\frac{1}{2}] \times \cdots \times [0,\frac{1}{2}] \subset \mathbb{R}^d$. Then for all $(a_2, \cdots,a_d)$, $x_1\in I_1$ and every $k=2, \cdots,d$, we have
\begin{align}
  \nonumber\int_0^{\frac{1}{2}} &\cdots\int_0^{\frac{1}{2}}\left(\prod_{j=2}^{d}x_j^{u_j}\right)f(x_1, \cdots,x_d)dx_2 \cdots dx_d\\
  \nonumber=&\sum_{m=1}^{k} \sum_{\substack{\sigma(k;2)< \cdots<\sigma(k;m)\\
  \sigma(k;m+1)< \cdots<\sigma(k;k)}}\left\{\prod_{j=2}^{m}(u_{\sigma(k;j)}+1)^{-1}
  \left(\frac{1}{2}\right)^{u_{\sigma(k;j)}+1} \right\}
  \left\{\prod_{j=m+1}^{k}(-1)(u_{\sigma(k;j)}+1)^{-1}\right\}\\
  \label{eq:lem1}&\times\int_0^{\frac{1}{2}} \cdots\int_0^{\frac{1}{2}}
  \left(\prod_{j=m+1}^{k}x_{\sigma(k;j)}^{u_{\sigma(k;j)}+1}\right)
  \left(\prod_{j=k+1}^{d}x_j^{u_j}\right)\\
  \nonumber&\hspace{2cm}\left[\frac{d}{dx_{\sigma(k;m+1)}}\cdots\frac{d}{dx_{\sigma(k;k)}}
  f(x_1, \cdots,x_d)\right]_{\substack{x_{\sigma(k;2)}=\frac{1}{2}\\ \vdots\\x_{\sigma(k;m)}=\frac{1}{2}}}
  dx_{\sigma(k;m+1)} \cdots dx_{\sigma(k;k)} dx_{k+1} \cdots dx_d,
\end{align}
where $\sigma(k;)$ is an element of the group given by \[
  \mathfrak{S}_k=\bigl\{\sigma(k;)\ \bigl|\ \sigma(k;):\{2, \cdots,k\}\to\{2, \cdots,k\},\ \sigma(k;)\ {\rm is\ a\ bijective\ function}\bigr\}.
\]
\end{lem}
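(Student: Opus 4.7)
The plan is to prove the formula by induction on $k$, with a single integration by parts in one new variable carried out at each step. For the base case $k=2$, I would apply integration by parts to the innermost integral $\int_0^{1/2} x_2^{u_2} f\, dx_2$, integrating $x_2^{u_2}$ to $x_2^{u_2+1}/(u_2+1)$ and differentiating $f$ in $x_2$. The boundary at $x_2=0$ vanishes because $\mathrm{Re}(u_2+1)>0$: in case $a_2=0$ this reads $\mathrm{Re}(s_d(2)-d+1)>0$, which follows from the standing hypothesis $\mathrm{Re}(s_k)>1$ for all $k$, and in case $a_2=1$ this reads $\mathrm{Re}(s_1)>0$, which is immediate. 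The surviving boundary at $x_2=1/2$ is the $m=2$ summand (identity permutation), and the remaining integral is the $m=1$ summand, matching the right-hand side.

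For the inductive step, assume the formula holds for $k$. Each term on the right-hand side of the $k$-formula contains the factor $x_{k+1}^{u_{k+1}}$ and the integration $dx_{k+1}$, with the (derivatives of) $f$ being the only other $x_{k+1}$-dependent factor. I would apply integration by parts in $x_{k+1}$ to each such term. The boundary at $x_{k+1}=0$ vanishes again by $\mathrm{Re}(u_{k+1}+1)>0$, leaving two contributions: a boundary term at $x_{k+1}=1/2$ (which places the index $k+1$ in the ``evaluated at $1/2$'' block) and a remaining integral term with $x_{k+1}^{u_{k+1}+1}$ and one additional derivative applied to $f$ (which places $k+1$ in the ``integrated'' block).

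The main obstacle is combinatorial bookkeeping rather than any analytic subtlety. Since $k+1$ is strictly larger than every index in $\{2,\dots,k\}$, it sits at the end of whichever sorted block it joins, so every admissible $\sigma(k+1;)$ corresponds uniquely to an admissible $\sigma(k;)$ together with a binary choice of block; in particular the sums $\sum_{m=1}^{k+1} \sum_{\sigma(k+1;)}$ reorganize as $\sum_{m=1}^{k} \sum_{\sigma(k;)}$ followed by the binary choice. I then have to check that the new coefficient factor picked up at this step is exactly $(u_{k+1}+1)^{-1}(1/2)^{u_{k+1}+1}$ for the boundary choice and $-(u_{k+1}+1)^{-1}$ for the integrated choice, matching the extra factor that appears in the $(k+1)$-formula relative to the $k$-formula. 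Since each integration by parts produces precisely one such factor per step, the products in the coefficients iterate correctly and the induction closes.
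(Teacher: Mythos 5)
Your proposal is correct and follows essentially the same route as the paper: induction on $k$ with the base case $k=2$ handled by a single integration by parts in $x_2$, and the inductive step performed by integrating by parts in the newest variable, observing that since this index is the largest it must occupy the final slot of whichever sorted block (boundary-evaluated or integrated) it joins, so the two resulting terms are exactly the $\sigma(k;m)=k$ and $\sigma(k;k)=k$ contributions. Your explicit verification that the boundary term at $0$ vanishes because $\Re(u_j+1)>0$ under the standing hypothesis $\Re(s_k)>1$ is a detail the paper leaves implicit, but it is the right justification.
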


\begin{proof}
In the case of $k=2$, using integration by parts with respect to $x_2$ on the left-hand side of (\ref{eq:lem1}), we have
\begin{align}
  \nonumber\int_0^{\frac{1}{2}} \cdots&\int_0^{\frac{1}{2}}\left(\prod_{j=2}^{d}x_j^{u_j}\right)f(x_1, \cdots,x_d)dx_2 \cdots dx_d\\
  \label{eq:prf1}=&(u_2+1)^{-1}\left(\frac{1}{2}\right)^{u_2+1}\int_0^{\frac{1}{2}} \cdots\int_0^{\frac{1}{2}}\left(\prod_{j=3}^{d}x_j^{u_j}\right)\left[f(x_1, \cdots,x_d)\right]_{x_2=\frac{1}{2}}dx_3 \cdots dx_d\\
\nonumber &-(u_2+1)^{-1}\int_0^{\frac{1}{2}} \cdots\int_0^{\frac{1}{2}}x_2^{u_2+1}\left(\prod_{j=3}^{d}x_j^{u_j}\right)\left[\frac{\partial}{ \partial x_2}f(x_1, \cdots,x_d)\right]dx_2 \cdots dx_d.
\end{align}
The first term on the right-hand side of (\ref{eq:prf1}) is the term corresponding to $m=2$ of (\ref{eq:lem1}), and the second term is the term corresponding to $m=1$ of (\ref{eq:lem1}).

Suppose that Lemma 1 holds for $k-1$. Using integration by parts with respect to $x_k$ on the right-hand side of (\ref{eq:lem1}), we have
\begin{align}
  \nonumber &(u_k+1)^{-1}\left(\frac{1}{2}\right)^{u_k+1}
  \left[\frac{d}{dx_{\sigma(k-1;m+1)}}\cdots\frac{d}{dx_{\sigma(k-1;k-1)}}
  f(x_1, \cdots,x_d)\right]_{\substack{x_{\sigma(k-1;2)}=\frac{1}{2}\\ \vdots\\x_{\sigma(k-1;m)}=\frac{1}{2}\\x_{k}=\frac{1}{2}}}\\
  \label{eq:prf2} -&(u_k+1)^{-1}\int_0^{\frac{1}{2}}x_{k}^{u_k+1}
  \left[\frac{ \partial}{ \partial x_k}\frac{d}{dx_{\sigma(k-1;m+1)}}\cdots\frac{d}{dx_{\sigma(k-1;k-1)}}
  f(x_1, \cdots,x_d)\right]_{\substack{x_{\sigma(k-1;2)}=\frac{1}{2}\\ \vdots\\x_{\sigma(k-1;m)}=\frac{1}{2}}}dx_k.
\end{align}
The first term of (\ref{eq:prf2}) is the term corresponding to $k=\sigma(k;m)$ of (\ref{eq:lem1}), and the second term of (\ref{eq:prf2}) is the term corresponding to $k=\sigma(k;k)$ of (\ref{eq:lem1}).
\end{proof}

\begin{lem}
Let $f(x_1, \cdots,x_d)$ be as in Lemma 1. Then for all $(a_2, \cdots,a_d)$, $x_1\in I_1$, we have
\begin{align}
  \nonumber&\int_0^{\frac{1}{2}} \cdots\int_0^{\frac{1}{2}}
  \left(\prod_{j=2}^{d}x_j^{u_j}\right)f(x_1, \cdots,x_d)dx_2 \cdots dx_d\\
  \nonumber=&\sum_{m=1}^d \sum_{\substack{\sigma(2)< \cdots<\sigma(m)\\ \sigma(m+1)< \cdots<\sigma(d)}}\sum_{k_{\sigma(2)}=0}^{n_{\sigma(2)}} \cdots\sum_{k_{\sigma(m)}=0}^{n_{\sigma(m)}}
  \left\{\prod_{j=2}^{m}(-1)^{k_{\sigma(j)}}(u_{\sigma(j)}+1)_{k_{\sigma(j)}+1}^{-1}\left(\frac{1}{2}\right)^{u_{\sigma(j)}+k_{\sigma(j)}+1} \right\}\\
  \nonumber&\times\left\{\prod_{j=m+1}^{d}(-1)^{n_{\sigma(j)}+1}(u_{\sigma(j)}+1)_{n_{\sigma(j)}+1}^{-1}\right\}\\
  \nonumber&\times\int_0^{\frac{1}{2}} \cdots\int_0^{\frac{1}{2}}\left(\prod_{j=m+1}^{d}x_{\sigma(j)}^{u_{\sigma(j)}+n_{\sigma(j)}+1}\right)\\
  &\hspace{1.0cm}\left[\frac{d^{k_{\sigma(2)}}}{dx_{\sigma(2)}^{k_{\sigma(2)}}} \cdots\frac{d^{k_{\sigma(m)}}}{dx_{\sigma(m)}^{k_{\sigma(m)}}}\frac{d^{n_{\sigma(m+1)}+1}}{dx_{\sigma(m+1)}^{n_{\sigma(m+1)}+1}}\cdots\frac{d^{n_{\sigma(d)}+1}}{dx_{\sigma(d)}^{n_{\sigma(d)}+1}}\right.
f(x_1, \cdots,x_d)\Biggr]_{\substack{x_{\sigma(2)}=\frac{1}{2}\\ \vdots\\x_{\sigma(m)}=\frac{1}{2}}}dx_{\sigma(m+1)} \cdots dx_{\sigma(d)}\label{eq:lem2},
\end{align}
where $n_2, \cdots,n_d\in \mathbb{Z}_{\geq0}$, $\sigma$ are as in Theorem 1.
\end{lem}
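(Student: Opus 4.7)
The plan is to reduce Lemma~2 to a one-variable integration-by-parts identity applied in each of $x_2,\ldots,x_d$ independently, using Fubini to justify treating the variables in any order. The base case is Lemma~1 with $k=d$: there, the sums over $k_{\sigma(j)}$ in (\ref{eq:lem2}) collapse to $k_{\sigma(j)}=0$ and the two coefficient products agree, so the real content of Lemma~2 is the upgrade from one integration by parts per variable to $n_j+1$.

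First I would establish the one-variable identity
\[
  \int_0^{1/2} x^u h(x)\,dx = \sum_{k=0}^{n}\frac{(-1)^k}{(u+1)_{k+1}}\left(\frac{1}{2}\right)^{u+k+1} h^{(k)}(1/2) + \frac{(-1)^{n+1}}{(u+1)_{n+1}}\int_0^{1/2} x^{u+n+1} h^{(n+1)}(x)\,dx
\]
for $h\in C^\infty$ and $\Re(u)>-1$, by induction on $n$. A single integration by parts gives the $n=0$ case, the boundary term at $x=0$ vanishing because $\Re(u+1)>0$. Applying the $n=0$ identity to the residual integral with $u$ replaced by $u+1$ and iterating yields the inductive step, the Pochhammer factor accumulating via $(u+1)_k\cdot(u+k+1)=(u+1)_{k+1}$.

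Second, I would apply this identity in each variable $x_j$ with $u=u_j$, $n=n_j$. The required inequality $\Re(u_j)>-1$ holds in both branches of $a_j$ under the standing hypothesis $\Re(s_k)>1$ of this section: if $a_j=0$ then $\Re(u_j)=\Re(s_d(j))-d+j-2>-1$, and if $a_j=1$ then $\Re(u_j)=\Re(s_{j-1})-1>0$. Expanding the product of $d-1$ one-variable sums gives a sum over per-variable choices: either (i) a boundary contribution at $x_j=1/2$ with selected $k_j\in\{0,\ldots,n_j\}$, or (ii) the fully iterated integral term with $n_j+1$ derivatives and increased power $u_j+n_j+1$. Grouping $\{2,\ldots,d\}$ into the boundary subset $\{\sigma(2)<\cdots<\sigma(m)\}$ and its complement $\{\sigma(m+1)<\cdots<\sigma(d)\}$, each listed in increasing order, reindexes the resulting expression as the double sum
\[
  \sum_{m=1}^d\ \sum_{\substack{\sigma(2)<\cdots<\sigma(m)\\ \sigma(m+1)<\cdots<\sigma(d)}}
\]
appearing in (\ref{eq:lem2}), with the inner sums over $k_{\sigma(2)},\ldots,k_{\sigma(m)}$ matching termwise.

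The main obstacle is the bookkeeping: one must verify that, after the reindexing, the signs $(-1)^{k_{\sigma(j)}}$ versus $(-1)^{n_{\sigma(j)}+1}$, the Pochhammer factors $(u_{\sigma(j)}+1)_{k_{\sigma(j)}+1}^{-1}$ versus $(u_{\sigma(j)}+1)_{n_{\sigma(j)}+1}^{-1}$, the powers of $1/2$, and the mixed partial derivatives of $f$ with their associated boundary evaluations $x_{\sigma(2)}=\cdots=x_{\sigma(m)}=1/2$ all line up with (\ref{eq:lem2}). This is routine once the one-variable identity is in hand, but requires care to keep each boundary evaluation attached to its correct partial derivative, which is precisely why Fubini is used to separate the $d-1$ integrations before integrating by parts.
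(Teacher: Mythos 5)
Your proof is correct, but it reorganizes the argument in a genuinely different way from the paper. The paper proceeds by two nested inductions: Lemma~1 first establishes the single-integration-by-parts version simultaneously in all of $x_2,\dots,x_d$ by induction on the number of variables treated, and Lemma~2 is then proved by induction on $n_2+\cdots+n_d$, integrating by parts once more in a chosen variable $x_k$ at each step and splitting the sum (as in (\ref{eq:1siki})) according to whether $x_k$ is currently a boundary variable or an integral variable. You instead prove a self-contained one-variable identity (iterated integration by parts, induction on $n$ alone, with the Pochhammer factor accumulating via $(u+1)_k(u+k+1)=(u+1)_{k+1}$) and then observe that the full left-hand side is the composition of $d-1$ commuting one-variable operators, so that expanding each as ``boundary sum plus residual integral'' and grouping by which variables contribute which piece yields exactly the double sum over $m$ and over the increasing sequences $\sigma(2)<\cdots<\sigma(m)$, $\sigma(m+1)<\cdots<\sigma(d)$. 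Your verification that $\Re(u_j)>-1$ in both branches of $a_j$ (needed to kill the boundary term at $x_j=0$) is correct under the standing hypothesis $\Re(s_k)>1$, and the coefficient matching you describe does go through: the $k$-th boundary term $(-1)^k(u+1)_{k+1}^{-1}(1/2)^{u+k+1}h^{(k)}(1/2)$ and the residual factor $(-1)^{n+1}(u+1)_{n+1}^{-1}$ are precisely the factors in (\ref{eq:lem2}). What your route buys is transparency: the right-hand side is revealed as literally the expansion of a product of two-term operators, and the double induction collapses to a single one-dimensional one plus commutativity of partial derivatives and evaluations in distinct variables. What the paper's route buys is that Lemma~1 stands alone as an intermediate statement whose shape is reused in the definition of $F_{f_a}$, and each inductive step stays syntactically close to the final formula. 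Either argument is complete; yours leaves the final bookkeeping as ``routine,'' which is at the same level of detail as the paper's own inductive step.
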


\begin{proof}
Induction on $n_2+ \cdots+n_d$.$\\$
In the case $n_2= \cdots=n_d=0$, (\ref{eq:lem2}) is valid by Lemma 1.

Suppose that (\ref{eq:lem2}) holds for $(n_2, \cdots,n_d)$, and let us prove (\ref{eq:lem2}) for $(n_2, \cdots,n_k+1, \cdots,n_d)$. The right-hand side of (\ref{eq:lem2}) is divided into two terms,
\begin{equation}
 \sum_{m=1}^d \sum_{\substack{\sigma(2)< \cdots<\sigma(m)\\ \sigma(m+1)< \cdots<\sigma(d)\\ \vspace{-0.5mm} \\ k\in\{ \sigma(2), \cdots, \sigma(m)\}}} \sum_{k_{\sigma(2)}=0}^{n_{\sigma(2)}} \cdots\sum_{k_{\sigma(m)}=0}^{n_{\sigma(m)}}+
  \sum_{m=1}^d \sum_{\substack{\sigma(2)< \cdots<\sigma(m)\\ \sigma(m+1)< \cdots<\sigma(d)\\ \vspace{-0.5mm} \\ k\in\{\sigma(m+1), \cdots,\sigma(d)\}}}\sum_{k_{\sigma(2)}=0}^{n_{\sigma(2)}} \cdots\sum_{k_{\sigma(m)}=0}^{n_{\sigma(m)}}.
  \label{eq:1siki}
\end{equation}
The first term of (\ref{eq:1siki}) has no integral of $x_k$, and the second term of (\ref{eq:1siki}) has an integral of $x_k$. Using integration by parts with respect to $x_k$ on the second term of (\ref{eq:1siki}), we have
\begin{align}
  \nonumber(u_k+n_k+2)^{-1}\left\{\left(\frac{1}{2}\right)^{u_k+n_k+2}
  \left[\frac{d^{k_{\sigma(2)}}}{dx_{\sigma(2)}^{k_{\sigma(2)}}} \cdots\frac{d^{k_{\sigma(m)}}}{dx_{\sigma(m)}^{k_{\sigma(m)}}}\frac{d^{n_{\sigma(m+1)}+1}}{dx_{\sigma(m+1)}^{n_{\sigma(m+1)}+1}}\cdots\frac{d^{n_{\sigma(d)}+1}}{dx_{\sigma(d)}^{n_{\sigma(d)}+1}}
  f(x_1, \cdots,x_d)\right]_{\substack{x_{\sigma(2)}=\frac{1}{2}\\ \vdots\\x_{\sigma(m)}=\frac{1}{2}\\ x_k=\frac{1}{2}}}\right.\\
  -\left.\int_0^{\frac{1}{2}}x_k^{u_k+n_k+2}
  \left[\frac{d^{k_{\sigma(2)}}}{dx_{\sigma(2)}^{k_{\sigma(2)}}} \cdots\frac{d^{k_{\sigma(m)}}}{dx_{\sigma(m)}^{k_{\sigma(m)}}}\frac{d^{n_{\sigma(m+1)}+1}}{dx_{\sigma(m+1)}^{n_{\sigma(m+1)}+1}}\cdots\frac{d^{n_k+2}}{dx_k^{n_k+2}}\cdots\frac{d^{n_{\sigma(d)}+1}}{dx_{\sigma(d)}^{n_{\sigma(d)}+1}}
  f(x_1, \cdots,x_d)\right]_{\substack{x_{\sigma(2)}=\frac{1}{2}\\ \vdots\\x_{\sigma(m)}=\frac{1}{2}}}dx_k\right\}.
  \label{eq:2siki}
\end{align}
Using (\ref{eq:1siki}) and (\ref{eq:2siki}), we find (\ref{eq:lem2}) for $(n_2, \cdots,n_k+1, \cdots,n_d)$.
\end{proof}

\begin{lem}
$\varphi_a(x_1, \cdots,x_d)$ is $C^{\infty}$ on $[0,1]\times[0,\frac{1}{2}]\times \cdots\times[0,\frac{1}{2}]\subset \mathbb{R}^d$.
\end{lem}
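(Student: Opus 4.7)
The plan is to identify the double series defining $\varphi_a$ with an explicit real-analytic function divided by a power of $x_1$. First I would dispose of the prefactor: on the domain, $1-x_j \in [1/2, 1]$ is bounded away from zero, so $(1-x_j)^{v_j} = \exp(v_j \log(1-x_j))$ is real-analytic in $x_j$ (for any fixed complex $v_j$). Hence it suffices to show that the double sum
\begin{equation*}
S(x_1, \ldots, x_d) := \sum_{k = n_1+1}^\infty \sum_{p_1+\cdots+p_d=k} \frac{B_{p_1}\cdots B_{p_d}}{p_1!\cdots p_d!}\, x_1^{k-n_1-1}\, t_2^{p_d(2)}\cdots t_d^{p_d(d)}
\end{equation*}
is $C^\infty$ on $[0,1] \times [0,1/2]^{d-1}$.

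The key observation is that the exponent of $x_1$ in a generic monomial equals $p_1 + \cdots + p_d$ while the exponent of $t_j$ ($j \geq 2$) is $p_j + \cdots + p_d$, so the monomial rearranges as $\prod_{j=1}^d \frac{B_{p_j}}{p_j!}(x_1 t_2 \cdots t_j)^{p_j}$ (with the empty product read as $1$ when $j=1$). Summing over all $p_j \geq 0$ independently and recognizing the Bernoulli generating function $\sum_p \frac{B_p}{p!} z^p = z/(e^z-1)$, the formal sum over all $k \geq 0$ becomes
\begin{equation*}
G(x_1, \ldots, x_d) := \sum_{k=0}^\infty \sum_{p_1+\cdots+p_d = k} \frac{B_{p_1}\cdots B_{p_d}}{p_1!\cdots p_d!}\, x_1^k\, t_2^{p_d(2)}\cdots t_d^{p_d(d)} = \prod_{j=1}^d \frac{x_1 t_2 \cdots t_j}{e^{x_1 t_2 \cdots t_j} - 1}.
\end{equation*}
To make this rigorous, I would check that each argument satisfies $|x_1 t_2 \cdots t_j| \leq 1 < 2\pi$ on the domain (since $x_1 \in [0,1]$ and each $t_j \in [0,1]$), so every factor $z/(e^z-1)$ is real-analytic at the relevant points; the double series for $G$ then converges absolutely and uniformly on the closed domain, and $G$ is real-analytic there.

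Finally, I would write $S = (G - P)/x_1^{n_1+1}$, where
\begin{equation*}
P(x_1, \ldots, x_d) := \sum_{k=0}^{n_1} \sum_{p_1+\cdots+p_d = k} \frac{B_{p_1}\cdots B_{p_d}}{p_1!\cdots p_d!}\, x_1^k\, t_2^{p_d(2)}\cdots t_d^{p_d(d)}
\end{equation*}
is precisely the degree-$n_1$ Taylor polynomial of $G$ in the variable $x_1$ at $x_1 = 0$ (with coefficients depending smoothly on $x_2, \ldots, x_d$). Since $G$ is jointly real-analytic, the difference $G - P$ vanishes to order $n_1+1$ at $\{x_1 = 0\}$ and the analytic factor theorem gives that $(G-P)/x_1^{n_1+1}$ extends to a real-analytic function on the domain. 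Multiplying by the smooth prefactor $\prod_{j=2}^d (1-x_j)^{v_j}$ yields that $\varphi_a$ is $C^\infty$ on $[0,1] \times [0,1/2]^{d-1}$.

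The main obstacle I anticipate is the combinatorial rearrangement that produces the closed-form product $G$, together with the rigorous justification that interchanging summations is legitimate uniformly on the closed domain (the bound $|x_1 t_2 \cdots t_j| \leq 1$ is exactly what keeps us away from the nearest singularities at $\pm 2\pi i$ of $z/(e^z - 1)$). Once $G$ is established as real-analytic, the division by $x_1^{n_1+1}$ is a routine consequence of the Taylor expansion at $x_1 = 0$.
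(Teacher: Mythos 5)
Your proposal is correct and follows essentially the same route as the paper: both reduce to the double sum, identify it as $\bigl(\prod_{j=1}^{d}\tfrac{x_1t_2\cdots t_j}{e^{x_1t_2\cdots t_j}-1}-P\bigr)/x_1^{n_1+1}$ with $P$ the truncated sum, and conclude smoothness after handling $x_1=0$. Your justification of the division step via the Taylor-polynomial/analytic factor argument is in fact a more explicit version of the paper's remark that $x_1=0$ is a removable singularity.
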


\begin{proof}
Since $(1-x_j)^{v_j}\ (j=2, \cdots,d)$ are $C^{\infty}$ on $[0,1]\times[0,\frac{1}{2}]\times \cdots\times[0,\frac{1}{2}]$, what we have to prove is that
\[
\sum_{k=n_1+1}^{\infty}\sum_{p_1+ \cdots+p_d=k}\frac{B_{p_1} \cdots B_{p_d}}{p_1! \cdots p_d!}x_1^{k-n_1-1}t_2^{p_d(2)} \cdots t_d^{p_d(d)}
\]
is $C^{\infty}$ on $[0,1]\times[0,\frac{1}{2}]\times \cdots\times[0,\frac{1}{2}]$. Clearly, we have
\begin{multline}
  \sum_{k=n_1+1}^{\infty}\sum_{p_1+ \cdots+p_d=k}\frac{B_{p_1} \cdots B_{p_d}}{p_1! \cdots p_d!}x_1^{k-n_1-1}t_2^{p_d(2)} \cdots t_d^{p_d(d)}\\
  =\frac{\prod_{j=1}^{d}\frac{x_1t_2 \cdots t_j}{e^{x_1t_2 \cdots t_j}-1}-
  \sum_{k=0}^{n_1}\sum_{p_1+ \cdots+p_d=k}\frac{B_{p_1} \cdots B_{p_d}}{p_1! \cdots p_d!}x_1^{k}t_2^{p_d(2)} \cdots t_d^{p_d(d)}}{x_1^{n_1+1}}\label{eq:lem3}.
\end{multline}
We prove that the right-hand side of (\ref{eq:lem3}) is $C^{\infty}$. The numerator of the right-hand side is $C^{\infty}$, so the right-hand side is $C^{\infty}$ except for $x_1=0$. We can find that $x_1=0$ is a removable singularity by observing the left-hand side of (\ref{eq:lem3}). Hence, $\varphi_a(x_1, \cdots,x_d)$ is $C^{\infty}$ on $[0,1]\times[0,\frac{1}{2}]\times \cdots\times[0,\frac{1}{2}]$.
\end{proof}

In Lemma 4 and Lemma 5, we use
\[
  \partial^{\mbox{\boldmath $\alpha$}}=\frac{\partial^{\alpha_1}}{ \partial x_1^{ \alpha_1}} \cdots \frac{\partial^{\alpha_d}}{ \partial x_d^{ \alpha_d}}
\]
where $\mbox{\boldmath $\alpha$}=(\alpha_1, \alpha_2, \cdots, \alpha_d)\in \mathbb{Z}_{\geq 0}^d$.

\begin{lem}
Let $ \mbox{\boldmath $\alpha$}=(0, \alpha_2, \cdots, \alpha_d)\in \mathbb{Z}_{\geq 0}^d$. Then we have
\[
  \partial^{\mbox{\boldmath $\alpha$}}\left( \prod_{j=2}^{d}\frac{x_1t_2 \cdots t_j}{e^{x_1t_2 \cdots t_j}-1}\right)
  = \sum_{m}\prod_{j=2}^{d}\frac{f_{m,j}(e^{x_1t_2, \cdots,t_j},x_1,t_2, \cdots,t_j)}{\left(e^{x_1t_2 \cdots t_j}-1\right)^{ \alpha_2+ \cdots+ \alpha_j+1}}
\]
where $\sum_{m}$ is a finite summation, $f_{m,j}$ is a polynomial which degree of $e^{x_1t_2 \cdots t_j}$ is less than or equal to $ \alpha_2+ \cdots+ \alpha_j$.
\end{lem}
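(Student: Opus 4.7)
The plan is to proceed by induction on $|\mbox{\boldmath $\alpha$}| = \alpha_2 + \cdots + \alpha_d$. Throughout, I would write $X_j := x_1 t_2 \cdots t_j$ and $\alpha_j^{*} := \alpha_2 + \cdots + \alpha_j$ for brevity, and note that since $t_k$ is either $x_k$ or $1-x_k$, we have $\partial t_k/\partial x_k = \epsilon_k \in \{\pm 1\}$ while no other $t_i$ depends on $x_k$.

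For the base case $\mbox{\boldmath $\alpha$} = \mathbf{0}$, the identity is immediate: take a single summand with $f_{m,j} = x_1 t_2 \cdots t_j$, which is a polynomial of degree $0 = \alpha_j^{*}$ in $e^{X_j}$, over the denominator $(e^{X_j}-1)^{1}$.

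For the inductive step, assume the claim for some $\mbox{\boldmath $\alpha$}$ and apply $\partial/\partial x_k$ to both sides, for some $k \in \{2, \ldots, d\}$. Since the $j$th factor of the product depends on $x_k$ exactly when $j \ge k$, the product rule yields, for each fixed summand of the inductive hypothesis, a sum over $j \ge k$ of terms in which only the $j$th factor has been differentiated. Using the computation $\partial e^{X_j}/\partial x_k = \epsilon_k (x_1 t_2 \cdots \widehat{t_k} \cdots t_j)\, e^{X_j}$ (where the hat denotes omission; crucially this is a polynomial, not a rational function in $t_k$), one obtains, writing $y$ for the formal variable standing in for $e^{X_j}$ in $f_{m,j}$,
\[
\frac{\partial}{\partial x_k}\frac{f_{m,j}}{(e^{X_j}-1)^{\alpha_j^{*}+1}} = \frac{\bigl[\epsilon_k(x_1 t_2 \cdots \widehat{t_k} \cdots t_j)\,e^{X_j}\partial_y f_{m,j} + \partial_{t_k} f_{m,j}\bigr](e^{X_j}-1) - (\alpha_j^{*}+1)\,\epsilon_k(x_1 t_2 \cdots \widehat{t_k} \cdots t_j)\,e^{X_j} f_{m,j}}{(e^{X_j}-1)^{\alpha_j^{*}+2}}.
\]
Each of the three terms in the numerator has degree at most $\alpha_j^{*}+1$ in $y$, since $\partial_y f_{m,j}$ has degree $\le \alpha_j^{*}-1$ (then multiplied by $y(y-1)$), while $f_{m,j}\cdot y$ and $\partial_{t_k}f_{m,j}\cdot (y-1)$ each have degree $\le \alpha_j^{*}+1$. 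Because $k\le j$, incrementing $\alpha_k$ raises the new $\alpha_j^{*}$ to $\alpha_j^{*}+1$, which matches both the new denominator exponent and the new degree bound. For $j<k$ the factor is untouched and its $\alpha_j^{*}$ is unchanged. Collecting all resulting terms gives an expression of the required form, and this closes the induction.

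The main obstacle is purely notational bookkeeping: one must verify that each increment of the degree in $y = e^{X_j}$ (from differentiating $f_{m,j}$ and multiplying by factors of $y$ or $y-1$) is exactly absorbed by the corresponding increment of $\alpha_j^{*}$. The argument goes through cleanly because $X_j/t_k = x_1 t_2 \cdots \widehat{t_k} \cdots t_j$ is polynomial in the $t_i$, so no denominators in $t_k$ are introduced by differentiation.
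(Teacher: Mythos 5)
Your proof is correct and follows essentially the same route as the paper: induction on $|\mbox{\boldmath $\alpha$}|$, observing that only the factors with $j\ge k$ depend on $x_k$, then applying the product and quotient rules and checking that the degree in $e^{x_1t_2\cdots t_j}$ of each numerator rises by at most one, matching the incremented exponent in the denominator. Your version is, if anything, slightly more explicit about the chain-rule bookkeeping (the key point that $\partial e^{X_j}/\partial x_k$ is a polynomial times $e^{X_j}$), which the paper leaves implicit in its formula (4.8).
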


\begin{proof}
Induction on $|\mbox{\boldmath $\alpha$}|= \alpha_1+ \cdots+ \alpha_d$.$\\$
When $|\mbox{\boldmath $\alpha$}|=0$, Lemma 4 is trivial.

Suppose that Lemma 4 is valid for $(0, \alpha_2, \cdots, \alpha_d)$. Differentiating with respect to $x_k$, we have
\begin{align}
  \nonumber&\frac{\partial}{ \partial x_k} \partial^{ \mbox{\boldmath $\alpha$}}\left( \prod_{j=2}^{d}\frac{x_1t_2 \cdots t_j}{e^{x_1t_2 \cdots t_j}-1}\right)\\
  \nonumber=& \sum_{m}\frac{\partial}{ \partial x_k}\left(\prod_{j=2}^{d}\frac{f_{m,j}(e^{x_1t_2, \cdots,t_j},x_1,t_2, \cdots,t_j)}{\left(e^{x_1t_2 \cdots t_j}-1\right)^{ \alpha_2+ \cdots+ \alpha_j+1}}\right)\\
  =& \sum_{m}\left(\prod_{j=2}^{k-1}\frac{f_{m,j}(e^{x_1t_2, \cdots,t_j},x_1,t_2, \cdots,t_j)}{\left(e^{x_1t_2 \cdots t_j}-1\right)^{ \alpha_2+ \cdots+ \alpha_j+1}}\right)
  \frac{\partial}{ \partial x_k}\left(\prod_{j=k}^{d}\frac{f_{m,j}(e^{x_1t_2, \cdots,t_j},x_1,t_2, \cdots,t_j)}{\left(e^{x_1t_2 \cdots t_j}-1\right)^{ \alpha_2+ \cdots+ \alpha_j+1}}\right).
  \label{eq:thm4}
\end{align}
Using the product rule to (\ref{eq:thm4}), we find
\begin{multline}
  \frac{\partial}{ \partial x_k}\left(\prod_{j=k}^{d}\frac{f_{m,j}(e^{x_1t_2, \cdots,t_j},x_1,t_2, \cdots,t_j)}{\left(e^{x_1t_2 \cdots t_j}-1\right)^{ \alpha_2+ \cdots+ \alpha_j+1}}\right)\\
  = \sum_{l=k}^{d}\left(\prod_{\substack{j=k\\j\neq l}}^{d}\frac{f_{m,j}(e^{x_1t_2, \cdots,t_j},x_1,t_2, \cdots,t_j)}{\left(e^{x_1t_2 \cdots t_j}-1\right)^{ \alpha_2+ \cdots+ \alpha_j+1}}\right)
  \frac{\partial}{ \partial x_k}\left(\frac{f_{m,l}(e^{x_1t_2, \cdots,t_l},x_1,t_2, \cdots,t_l)}{\left(e^{x_1t_2 \cdots t_l}-1\right)^{ \alpha_2+ \cdots+ \alpha_l+1}}\right).
  \label{eq:thm42}
\end{multline}
Using the quotient rule to (\ref{eq:thm42}), we get
\begin{multline}
  \frac{\partial}{ \partial x_k}\left(\frac{f_{m,l}(e^{x_1t_2, \cdots,t_l},x_1,t_2, \cdots,t_l)}{\left(e^{x_1t_2 \cdots t_l}-1\right)^{ \alpha_2+ \cdots+ \alpha_l+1}}\right)\\
  =\pm\frac{(e^{x_1t_2 \cdots t_l}-1)\frac{\partial}{ \partial x_k}f_{m,l}-( \alpha_1+ \cdots+ \alpha_l+1)x_1t_2 \cdots t_{k-1}t_{k+1} \cdots t_l f_{m,l}}{(e^{x_1t_2 \cdots t_l}-1)^{ \alpha_2+\cdots+\alpha_l+2}},
  \label{eq:thm43}
\end{multline}
where the choice of $\pm$ depends on $a_k$. In the numerator of (\ref{eq:thm43}), the degree of $e^{x_1t_2 \cdots t_l}$ is less than or equal to $ \alpha_2+ \cdots+ \alpha_l+1$. Hence, Lemma 4 is valid for $(0, \alpha_2, \cdots,\alpha_k+1, \cdots, \alpha_d)$.
\end{proof}

\begin{lem}
For each $ \mbox{\boldmath $\alpha$}=(0, \alpha_2, \cdots, \alpha_d)\in \mathbb{Z}_{\geq 0}^d$, $\partial^{ \mbox{\boldmath $\alpha$}}\psi_a(x_1, \cdots, x_d)$ is bounded on $[0,\infty)\times[0,\frac{1}{2}]\times \cdots \times[0,\frac{1}{2}]$.
\end{lem}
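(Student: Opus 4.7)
The plan is to use Leibniz's rule together with the explicit formula from Lemma 4. Write $\psi_a$ as the product of the polynomial-type factor $\prod_{j=2}^d (1-x_j)^{v_j}$ and the exponential-quotient factor $\prod_{j=2}^d g(y_j)$, where $g(y):=y/(e^y-1)$ and $y_j:=x_1 t_2\cdots t_j$. Since $\alpha_1=0$, Leibniz's rule expresses $\partial^{\alpha}\psi_a$ as a finite sum of products $\partial^{\beta}[\prod_j(1-x_j)^{v_j}]\cdot\partial^{\gamma}[\prod_j g(y_j)]$ with multi-indices $\beta+\gamma=\alpha$ (and $\beta_1=\gamma_1=0$).

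The polynomial-type factor is immediate to bound: each derivative $\partial_{x_j}^{\beta_j}(1-x_j)^{v_j}$ is a polynomial in $v_j$ times $(1-x_j)^{v_j-\beta_j}$, and since $x_j\in[0,1/2]$ gives $1-x_j\in[1/2,1]$, this piece is uniformly bounded on the domain.

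For the exponential-quotient factor, I would apply Lemma 4 to rewrite $\partial^{\gamma}\prod_j g(y_j)$ as the finite sum
\[
\sum_m \prod_{j=2}^d \frac{f_{m,j}(e^{y_j},x_1,t_2,\ldots,t_j)}{(e^{y_j}-1)^{N_j+1}},\qquad N_j:=\gamma_2+\cdots+\gamma_j,
\]
in which $f_{m,j}$ is a polynomial in $e^{y_j}$ of degree at most $N_j$. I then bound each summand on $[0,\infty)\times[0,1/2]^{d-1}$. When $y_j$ is large, the denominator grows like $e^{(N_j+1)y_j}$ while the numerator grows at most like $e^{N_j y_j}$ times a polynomial in $x_1$ and the $t_i$'s, giving a factor with exponential decay in $y_j$. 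When $y_j$ is bounded, the analyticity of $g$ at $0$ (via the Bernoulli expansion $g(y)=\sum_k B_k y^k/k!$) together with the structure of $f_{m,j}$ yields boundedness.

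The main obstacle is the mixed regime in which $x_1\to\infty$ while $y_j=x_1 t_2\cdots t_j$ stays bounded (so some $t_i\to 0$ at rate $1/x_1$): here a naive polynomial bound on the $x_1$-factors in $f_{m,j}$ does not decay. To control this, I would track carefully the inductive construction of $f_{m,j}$ in the proof of Lemma 4: each $x_1$ introduced by $\partial y_l/\partial x_k=\pm x_1\prod_{i\le l,\,i\neq k}t_i$ is always accompanied by the remaining $t_i$'s, so every $x_1$-power in $f_{m,j}$ comes packaged with enough $t_i$-factors to combine into bounded expressions in the $y_j$'s. Combined with the exponential decay from at least one $j$-factor where $y_j$ is large, this yields the required uniform bound on $\partial^{\alpha}\psi_a$.
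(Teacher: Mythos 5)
Your overall route is the same as the paper's: Leibniz's rule splits $\partial^{\alpha}\psi_a$ into the factor $\partial^{\beta}\prod_j(1-x_j)^{v_j}$, which you bound exactly as the paper does, and the factor $\partial^{\alpha-\beta}\prod_j g(y_j)$, which the paper disposes of in a single sentence (``by Lemma 4 this is bounded''). You are right to distrust that sentence and to single out the mixed regime $x_1\to\infty$ with $y_j=x_1t_2\cdots t_j$ bounded as the danger zone.

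However, your proposed repair does not work, and in fact no repair can establish boundedness, because the quantity in question is genuinely unbounded there. The factor $\partial y_l/\partial x_k=\pm x_1\prod_{i\le l,\,i\ne k}t_i$ equals $\pm\, y_l/t_k$, i.e.\ it is missing precisely the factor $t_k$; when $a_k=0$ one may let $x_k=t_k\to0$ and $x_1\to\infty$ with $y_l$ bounded, and then $y_l/t_k$ is of size $x_1$. Concretely, for $d=2$ and $a_2=0$,
\[
\frac{\partial}{\partial x_2}\left[\frac{x_1x_2}{e^{x_1x_2}-1}\right]
= x_1\,\frac{e^{x_1x_2}-1-x_1x_2e^{x_1x_2}}{(e^{x_1x_2}-1)^2}
\longrightarrow -\frac{x_1}{2}\qquad(x_2\to0),
\]
so $\partial_{x_2}\psi_a$ is unbounded on $[0,\infty)\times[0,\tfrac12]$, and the claimed pairing of every $x_1$ with enough $t_i$'s fails by exactly one $t_k$ per differentiation. (The same defect is present in the paper's own proof, which simply asserts the boundedness.) What is true, and what your analysis does prove once you concede one uncompensated factor of $x_1$ per derivative, is the polynomial bound $\partial^{\alpha}\psi_a=O\bigl((1+x_1)^{|\alpha|}\bigr)$ uniformly in $x_2,\ldots,x_d$; this weaker estimate is all that the application in Theorem 1 needs, since $F_{\psi_a}(x_1)$ is there multiplied by $x_1^{s_d(1)-d}/(e^{x_1}-1)$, whose exponential decay absorbs any polynomial growth. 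So the correct move is to state and prove the polynomial bound rather than boundedness, not to look for hidden $t$-factors in $f_{m,j}$.
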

\begin{proof}
By the Leibniz rule, we have
\[
  | \partial^{ \mbox{\boldmath $\alpha$}}\psi_a(x_1, \cdots,x_d)|
  \ \leq\ \sum_{\mbox{\boldmath $\beta$}\leq \mbox{\boldmath $\alpha$}}
  \begin{pmatrix}
  \mbox{\boldmath $\alpha$}\\ \mbox{\boldmath $\beta$}
  \end{pmatrix}
  \left| \partial^{ \mbox{\boldmath $\beta$}}\left(\prod_{j=2}^{d}(1-x_j)^{v_j}\right)\right|
  \left|\partial^{\mbox{\boldmath $\alpha$}-\mbox{\boldmath $\beta$}}\left( \prod_{j=2}^{d}\frac{x_1t_2 \cdots t_j}{e^{x_1t_2 \cdots t_j}-1}\right)\right|.
\]

By Lemma 4, $\left|\partial^{\mbox{\boldmath $\alpha$}-\mbox{\boldmath $\beta$}}\left(\prod_{j=2}^{d}x_1t_2 \cdots t_j/(e^{x_1t_2 \cdots t_j}-1)\right)\right|$ is bounded on $[0,\infty)\times[0,\frac{1}{2}]\times \cdots \times[0,\frac{1}{2}]$. Hence what we have to prove is that $\left| \partial^{ \mbox{\boldmath $\beta$}}\prod_{j=2}^{d}(1-x_j)^{v_j}\right|$ is bounded on $[0,\infty)\times[0,\frac{1}{2}]\times \cdots \times[0,\frac{1}{2}]$. We find
\begin{align}
  \nonumber\left| \partial^{ \mbox{\boldmath $\beta$}}\prod_{j=2}^{d}(1-x_j)^{v_j}\right|
  =& \prod_{j=2}^{d}\left| \partial^{ \beta_j}_{x_j}(1-x_j)^{v_j}\right|\\
  \nonumber=& \prod_{j=2}^{d}\left|(v_j-\beta_j+1)_{\beta_j}(1-x_j)^{v_j-\beta_j}\right|\\
  \nonumber\leq&\prod_{j=2}^{d}\left|(v_j-\beta_j+1)_{\beta_j}\right|\max\left\{1,\left|\frac{1}{2}\right|^{\Re(v_j-\beta_j)}\right\},
\end{align}
where $\mbox{\boldmath $\beta$}=(\beta_1, \cdots,\beta_d)$.
\end{proof}

\begin{lem}
Let $|a|,|b|\leq\frac{1}{2},\ a\neq0,b\neq0$. Then for each $m,n\in \mathbb{Z}$, we have
\[
\frac{(a)_n}{(b)_m}=\left\{ \begin{array}{ll}
\frac{a}{b}\left(\frac{(n-1)!}{(m-1)!}+O(a)+O(b)\right)&(n\geq m\geq 1),\\
O(a)  &(n\geq1>m),\\
(-1)^{m+n}\frac{(-m)!}{(-n)!}+O(a)+O(b)&(1>n\geq m)\\
\end{array} \right.
\]
as $\ a,b\to0$.
\end{lem}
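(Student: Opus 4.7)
The plan is to expand $(a)_n$ and $(b)_m$ separately as power series in $a$ and $b$ near zero, and then combine the expansions in each of the three cases. The decisive observation is that the behavior of $(a)_n$ at $a=0$ depends on the sign of $n$: when $n\geq 1$, the definition $(a)_n = a(a+1)\cdots(a+n-1)$ exhibits an explicit factor of $a$, while when $n\leq 0$, writing $(a)_n = \Gamma(a+n)/\Gamma(a) = 1/[(a-1)(a-2)\cdots(a-k)]$ with $k=-n$ (via the functional equation for $\Gamma$) shows that $(a)_n$ is analytic and nonzero at the origin.

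First I would record the two basic expansions. For $n\geq 1$,
\[
  (a)_n = a\bigl[(n-1)! + O(a)\bigr],
\]
and for $n\leq 0$,
\[
  (a)_n = \frac{(-1)^n}{(-n)!} + O(a),
\]
both valid as $a\to 0$ uniformly on $|a|\leq 1/2$; analogous formulas hold for $(b)_m$. Each follows from a direct Taylor expansion of the associated finite product. Then I would assemble the three cases. When $n\geq m\geq 1$, both Pochhammer symbols carry an explicit vanishing factor, so the ratio factors as $(a/b)$ times a ratio of analytic functions with nonzero limits; expanding the latter to first order produces $(n-1)!/(m-1)! + O(a)+O(b)$. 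When $n\geq 1 > m$, the denominator tends to the nonzero constant $(-1)^m/(-m)!$ while the numerator is $O(a)$, giving $O(a)$ overall. When $1 > n\geq m$, both factors are analytic with nonzero limits, and straightforward division yields $(-1)^{m+n}(-m)!/(-n)! + O(a)+O(b)$.

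The main obstacle is not analytic but purely combinatorial: one must track the signs $(-1)^n = (-1)^{-n}$ and the factorials $(-n)!$ consistently for $n\leq 0$ so that the three cases align with the stated formulas, and verify that the implicit constants in the $O$-terms remain uniform over the polydisk $|a|,|b|\leq 1/2$ (which is automatic since the analytic factors $\prod_{k=1}^{n-1}(a+k)$ and $\prod_{k=1}^{-n}(a-k)^{-1}$ have bounded derivatives there).
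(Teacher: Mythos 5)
Your proposal is correct and follows essentially the same route as the paper: expand the finite products $a(a+1)\cdots(a+n-1)$ (for $n\geq1$) and $1/[(a-1)\cdots(a+n)]$ (for $n\leq0$) to first order at the origin and divide case by case, using that for $|b|\leq\tfrac12$ the factors $(b-j)$, $j\geq1$, are bounded away from zero. The sign bookkeeping $(-1)^{-n}=(-1)^{n}$ and the uniformity of the $O$-constants check out as you describe.
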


\begin{proof}
In the case $n\geq m\geq 1$, we have
\begin{align}
  \nonumber\frac{(a)_n}{(b)_m}=&\frac{a}{b}\left(\frac{(a+1) \cdots(a+n-1)}{(b+1) \cdots(b+m-1)}\right)\\
  \nonumber=&\frac{a}{b}\left\{\bigg((n-1)!+O(a)\bigg)\left(\frac{1}{(m-1)!}+O(b)\right)\right\}\\
  \nonumber=&\frac{a}{b}\bigg(\frac{(n-1)!}{(m-1)!}+O(a)+O(b)\bigg).
\end{align}
In the case $n\geq1>m$, we have
\begin{align}
  \nonumber\frac{(a)_n}{(b)_m}=&a(a+1)\cdots(a+n-1)(b-1) \cdots(b+m)\\
  \nonumber\ll& a.
\end{align}
In the case $1>n\geq m$, we have
\begin{align}
  \nonumber\frac{(a)_n}{(b)_m}&=\frac{(b-1) \cdots(b+m)}{(a-1) \cdots(a+n)}\\
  \nonumber&=\big\{(-1)^m(-m)!+O(b)\big\}\left\{(-1)^n(-n)!^{-1}+O(a)\right\}\\
  \nonumber&=(-1)^{m+n}\frac{(-m)!}{(-n)!}+O(a)+O(b).
\end{align}

\end{proof}

\section{Proof of Theorem 1}

In this section, we prove Theorem 1.

By [\cite{zhao}, p1279, (7)], we have
\begin{multline}
  \Gamma(s_1) \cdots \Gamma(s_d)\zeta_d(s_1, \cdots,s_d)\\
  =\int_0^1 \cdots\int_0^1\int_0^{\infty}\prod_{j=1}^{d}x_j^{s_d(j)-d+j-2} \prod_{j=2}^{d}(1-x_j)^{s_{j-1}-1} \prod_{j=1}^{d}\frac{x_1 \cdots x_j}{e^{x_1 \cdots x_j}-1}dx_1 \cdots dx_d.
  \label{eq:zhao}
\end{multline}
The right-hand side of (\ref{eq:zhao}) is divided into two terms,
\begin{equation}
  \int_0^1 \cdots \int_0^1 \int_0^{\infty}=\int_0^1 \cdots \int_0^1 \int_0^{1}+\int_0^1 \cdots \int_0^1 \int_1^{\infty}.
  \label{eq:ab}
\end{equation}
First we consider the first term of (\ref{eq:ab}). By $x/(e^x-1)=\sum_{m=0}^{\infty}(B_m/m!)x^m\ (|x|<2\pi)$, we see that the first term is
\begin{equation}
  \int_0^1 \cdots \int_0^1\prod_{j=1}^{d}x_j^{s_d(j)-d+j-2} \prod_{j=2}^{d}(1-x_j)^{s_{j-1}-1} \prod_{j=1}^{d}\left(\sum_{k=0}^{\infty}\frac{B_k}{k!}(x_1 \cdots x_j)^k\right)dx_1 \cdots dx_d.
\label{eq:a}
\end{equation}
Further we divide the summation in (\ref{eq:a}) as
\begin{align}
  \nonumber\prod_{j=1}^{d}\sum_{k=0}^{\infty}\frac{B_k}{k!}(x_1 \cdots x_j)^k=&
  \sum_{k=0}^{\infty}\sum_{p_1+ \cdots +p_d=k}\frac{B_{p_1} \cdots B_{p_d}}{p_1! \cdots p_d!}x_1^kx_2^{p_d(2)} \cdots x_d^{p_d(d)}\\
  \nonumber=&\sum_{k=0}^{n_1}\sum_{p_1+ \cdots +p_d=k}\frac{B_{p_1} \cdots B_{p_d}}{p_1! \cdots p_d!}x_1^kx_2^{p_d(2)} \cdots x_d^{p_d(d)}\\
  &+\sum_{k=n_1+1}^{\infty}\sum_{p_1+ \cdots +p_d=k}\frac{B_{p_1} \cdots B_{p_d}}{p_1! \cdots p_d!}x_1^kx_2^{p_d(2)} \cdots x_d^{p_d(d)}.
  \label{eq:a12}
\end{align}
The contribution of the first term of (\ref{eq:a12}) is
\begin{equation}
  \sum_{k=0}^{n_1}\sum_{p_1+ \cdots +p_d=k}\frac{B_{p_1} \cdots B_{p_d}}{p_1! \cdots p_d!}
  \frac{1}{s_d(1)-d+k} \prod_{j=2}^{d}B(s_d(j)-d+j+p_d(j)-1,s_{j-1}).
  \label{eq:a1}
\end{equation}
This is the first term of (\ref{eq:thm1}). Changing the order of integration of the second term of (\ref{eq:a12}), we have
\begin{multline}
  \int_0^1x_1^{s_d(1)-d+n_1}\int_0^1 \cdots\int_0^1 
  \left( \prod_{j=2}^{d}x_j^{s_d(j)-d+j-2}(1-x_j)^{s_{j-1}-1}\right)\\
  \left(\sum_{k=n_1+1}^{\infty}\sum_{p_1+ \cdots+p_d=k}
  \frac{B_{p_1} \cdots B_{p_d}}{p_1! \cdots p_d!}x_1^{k-n_1-1}x_2^{p_d(2)} \cdots x_d^{p_d(d)}\right)
  dx_2 \cdots dx_ddx_1.\label{eq:cc}
\end{multline}
Dividing the integral into $\int_0^{1/2}$ and $\int_{1/2}^1$ for $x_2, \cdots,x_d$, we have
\[
  \int_0^1 \cdots \int_0^1=
  \sum_{(a_2, \cdots,a_d)}\int_{\frac{a_2}{2}}^{\frac{a_2+1}{2}}\cdots\int_{\frac{a_d}{2}}^{\frac{a_d+1}{2}},
\]
where the notation $ \sum_{(a_2, \cdots,a_d)}$ is defined in the statement of Theorem 1. Changing variables, we find that (\ref{eq:cc}) is
\begin{multline}
  \nonumber\int_0^1x_1^{s_d(1)-d+n_1}\Bigg\{\sum_{(a_2,\cdots,a_d)}\int_0^{\frac{1}{2}}\cdots\int_0^{\frac{1}{2}} 
  \left( \prod_{j=2}^{d}x_j^{u_j}(1-x_j)^{v_j}\right)\\
  \nonumber\left(\sum_{k=n_1+1}^{\infty}\sum_{p_1+ \cdots+p_d=k}
  \frac{B_{p_1} \cdots B_{p_d}}{p_1! \cdots p_d!}x_1^{k-n_1-1}t_2^{p_d(2)} \cdots t_d^{p_d(d)}\right)
  dx_2 \cdots dx_d\Bigg\}dx_1.
\end{multline}
By Lemma 2, Lemma 3 and the definition of $ \varphi_a(x_1, \cdots,x_d)$, we find that the above is
\begin{equation}
  \int_0^1x_1^{s_d(1)-d+n_1}F_{ \varphi_a}(x_1)dx_1.
  \label{eq:a2}
\end{equation}
By (\ref{eq:a1}) and (\ref{eq:a2}), we see that the first term of (\ref{eq:ab}) is
\begin{align}
  \nonumber&\sum_{k=0}^{n_1}\sum_{p_1+ \cdots +p_d=k}\frac{B_{p_1} \cdots B_{p_d}}{p_1! \cdots p_d!}
  \frac{1}{s_d(1)-d+k} \prod_{j=2}^{d}B(s_d(j)-d+j+p_d(j)-1,s_{j-1})\\
  +&\int_0^1x_1^{s_d(1)-d+n_1}F_{ \varphi_a}(x_1)dx_1.
  \label{eq:A}
\end{align}

Next, we consider the second term of (\ref{eq:ab}). Similarly to the deformation of (\ref{eq:cc}), we have
\[
  \int_1^{\infty}\frac{x_1^{s_d(1)-d}}{e^{x_1}-1}\left(\sum_{(a_2,\cdots,a_d)}\int_0^{\frac{1}{2}}\cdots\int_0^{\frac{1}{2}} 
  \prod_{j=2}^{d}x_j^{u_j}(1-x_j)^{v_j}
  \frac{x_1t_2 \cdots t_j}{e^{x_1t_2 \cdots t_j}-1}
  dx_2 \cdots dx_d\right)dx_1.
\]
Using Lemma 2, we find that the above is

\begin{equation}
  \int_1^{\infty}\frac{x_1^{s_d(1)-d}}{e^{x_1}-1}F_{\psi_a}(x_1)dx_1.
  \label{eq:B}
\end{equation}
By (\ref{eq:A}) and (\ref{eq:B}), we obtain (\ref{eq:thm1}). 

Now let us consider when (\ref{eq:thm1}) is holomorphic.
The first term is holomorphic when
\begin{align}
  \nonumber &s_d(1)\neq d,d-1, \cdots,d-n_1, &&\\
  \nonumber &s_d(j)\neq d-j+1,d-j,d-j-1, \cdots &&(j=2, \cdots,d),\\
  \nonumber &s_j\neq0,-1,-2, \cdots &&(j=1, \cdots,d-1).
\end{align}
By Lemma 3, the second term is holomorphic when
\begin{align}
  \nonumber &s_d(j)\neq d-j+1,d-j, \cdots,d-j+1-n_j &&(j=2, \cdots,d),\\
  \nonumber &s_j\neq0,-1, \cdots,-n_j &&(j=2, \cdots,d-1),\\
  \nonumber &\Re(s_d(j))>d-j-n_j&&(j=1, \cdots,d),\\
  \nonumber &\Re(s_{j-1})>-n_j-1&&(j=2,\cdots,d).
\end{align}
By Lemma 5, the third term is holomorphic when
\begin{align}
  \nonumber &s_d(j)\neq d-j+1,d-j, \cdots,d-j+1-n_j &&(j=2, \cdots,d),\\
  \nonumber &s_j\neq0,-1, \cdots,-n_j &&(j=2, \cdots,d-1),\\
  \nonumber &\Re(s_d(j))>d-j-n_j&&(j=2, \cdots,d),\\
  \nonumber &\Re(s_{j-1})>-n_j-1&&(j=2,\cdots,d).
\end{align}
Hence, we obtain Theorem 1.

\section{Proof of Theorem 2}

In this section, we prove Theorem 2. If $d=1$, $\zeta_1(s_1)$ is Riemann zeta function. Hence, Theorem 2 is clear. So we prove Theorem 2 in the case $d>1$. Suppose that $m_j,\ \varepsilon_j\ (j=1, \cdots,d)$ and $M$ are defined in the statement of Theorem 2. We use (\ref{eq:thm1}) with $s_j=-m_j+ \varepsilon_j\ (j=1 \cdots,d)$ and $n_1= \cdots=n_d=M+d$.

First, we estimate the second term and the third term. When $ (\varepsilon_1, \cdots, \varepsilon_d)\to(0, \cdots,0)$, these terms are bounded except $(u_{\sigma(j)}+1)_{n_{\sigma(j)}+1}^{-1}$ and $(u_{\sigma(j)}+1)_{k_{\sigma(j)}+1}^{-1}$. Hence, we have
\[
  \int_0^1 x_1^{s_d(1)-d+n_1}F_{\varphi_a}(x_1)dx_1
  +\int_1^{\infty} \frac{x_1^{s_d(1)-d}}{e^{x_1}-1}F_{\psi_a}(x_1)dx_1
  = \sum_{(a_2, \cdots,a_d)}O\left( \prod_{j=2}^{d}w_j^{-1}\right)
\]
where
\[
  w_j:=\begin{cases}
  \varepsilon_d(j)& (a_j=0)\\
  \varepsilon_{j-1}& (a_j=1).
\end{cases}
\]
On the other hand, using $ \Gamma(z) \Gamma(1-z)=\pi/\sin(\pi z)$, we can estimate
\begin{align}
  \nonumber\frac{1}{\Gamma(s_1)\cdots\Gamma(s_d)}\ll&\sin(\pi s_1) \cdots \sin(\pi s_d)\\
  \nonumber\ll&\sin(\pi \varepsilon_1) \cdots\sin(\pi \varepsilon_d)\\
  \nonumber\ll&\varepsilon_1 \cdots \varepsilon_d.
\end{align}
Then, we have
\begin{align}
  \nonumber&\frac{1}{\Gamma(s_1)\cdots\Gamma(s_d)}\left(
  \int_0^1 x_1^{s_d(1)-d+n_1}F_{\varphi_a}(x_1)dx_1
  +\int_1^{\infty} \frac{x_1^{s_d(1)-d}}{e^{x_1}-1}F_{\psi_a}(x_1)dx_1\right)\\
  \nonumber=&\sum_{(a_1, \cdots,a_d)}O\left(\left( \prod_{j=2}^{d}\frac{ \varepsilon_{j-1}}{w_j}\right) \varepsilon_d \right)\\
  \nonumber=&\sum_{(a_1, \cdots,a_d)}O\left(\left( \prod_{\substack{j=2\\a_j=0}}^{d}\frac{ \varepsilon_{j-1}}{\varepsilon_d(j)}\right) \varepsilon_d \right).
\end{align}
Since $\varepsilon_k/ \varepsilon_d(j)\ll1(j=1, \cdots,d,\ k=j, \cdots,d)$, we obtain
\begin{equation}
  \frac{1}{\Gamma(s_1)\cdots\Gamma(s_d)}\left(
  \int_0^1 x_1^{s_d(1)-d+n_1}F_{\varphi_a}(x_1)dx_1
  +\int_1^{\infty} \frac{x_1^{s_d(1)-d}}{e^{x_1}-1}F_{\psi_a}(x_1)dx_1\right)
  = \sum_{j=1}^{d}O( \varepsilon_j).
 \label{eq:23}
\end{equation}

Next, we estimate the first term of (\ref{eq:thm1}). First, we estimate the factors containing gamma funcions and beta functions as
\begin{align}
  \nonumber&\frac{1}{\Gamma(s_1) \cdots \Gamma(s_d)}
  \prod_{j=2}^{d}B(s_d(j)-d+j+p_d(j)-1,s_{j-1})\\
  \nonumber=&\frac{1}{ \Gamma(s_d)} \prod_{j=2}^{d}
  \frac{ \Gamma(s_d(j)-d+j+p_d(j)-1)}{ \Gamma(s_d(j-1)-d+j+p_d(j)-1)}\\
  =&\frac{1}{( \varepsilon_d)_{-m_d} \Gamma( \varepsilon_d(1))}
  \prod_{j=2}^{d}\frac{(\varepsilon_d(j))_{s_d(j)-d+j+p_d(j)-1}}{(\varepsilon_d(j-1))_{s_d(j-1)-d+j+p_d(j)-1}}.
  \label{eq:gabe}
\end{align}
By Lemma 6, we have
\begin{align}
  \nonumber\frac{1}{( \varepsilon_d)_{-m_d} \Gamma( \varepsilon_d(1))}
  =&\bigl((-1)^{m_d}m_d!+O( \varepsilon_d)\bigr)
  \left(\frac{\sin(\pi \varepsilon_d(1))}{\pi}\Gamma(1- \varepsilon_d(1))\right)\\
  \nonumber=&(-1)^{m_d}m_d! \varepsilon_d(1)+O( \varepsilon_d(1)^2)+O( \varepsilon_d(1) \varepsilon_d)
\end{align}
and
\begin{multline}
  \prod_{j=2}^{d}\frac{(\varepsilon_d(j))_{s_d(j)-d+j+p_d(j)-1}}{(\varepsilon_d(j-1))_{s_d(j-1)-d+j+p_d(j)-1}}\\
  = \prod_{j=2}^{d}\left(h(-m_d(j)-d+j+p_d(j)-1,-m_d(j-1)-d+j+p_d(j)-1)
   \frac{[ \varepsilon_d(j)]_{-m_d(j)-d+j+p_d(j)-1}}{[ \varepsilon_d(j-1)]_{-m_d(j-1)-d+j+p_d(j)-1}}\right)\\
  \nonumber+\sum_{j=2}^{d}\left\{
  O\left(\frac{ \varepsilon_d(j)}{ \varepsilon_d(j-1)} \varepsilon_d(j)\right)
  +O\bigl(\varepsilon_d(j-1)\bigr)+O\bigl( \varepsilon_d(j)\bigr)\right\},
\end{multline}
hence, we find (\ref{eq:gabe}) is
\begin{multline}
  \frac{1}{\Gamma(s_1) \cdots \Gamma(s_d)}
  \prod_{j=2}^{d}B(s_d(j)-d+j+p_d(j)-1,s_{j-1})=(-1)^{m_d}m_d!\varepsilon_d(1)\times \\
  \times\prod_{j=2}^{d}\left(h(-m_d(j)-d+j+p_d(j)-1,-m_d(j-1)-d+j+p_d(j)-1)
   \frac{[ \varepsilon_d(j)]_{-m_d(j)-d+j+p_d(j)-1}}{[ \varepsilon_d(j-1)]_{-m_d(j-1)-d+j+p_d(j)-1}}\right)\\
  \label{eq:gabe2}+ \sum_{j=1}^{d}O\bigl( \varepsilon_j \varepsilon_d(1)\bigr).
\end{multline}
Using (\ref{eq:gabe2}), we can estimate the first term of (\ref{eq:thm1}),
\begin{multline}
  \nonumber\sum_{k=0}^{n_1}\sum_{p_1+ \cdots+p_d=k}\frac{B_{p_1}\cdots B_{p_d}}{p_1! \cdots p_d!}\frac{1}{s_d(1)-d+k}\Bigg\{(-1)^{m_d}m_d!\varepsilon_d(1)\times \\
  \times\prod_{j=2}^{d}\left(h(-m_d(j)-d+j+p_d(j)-1,-m_d(j-1)-d+j+p_d(j)-1)
  \nonumber \frac{[ \varepsilon_d(j)]_{-m_d(j)-d+j+p_d(j)-1}}{[ \varepsilon_d(j-1)]_{-m_d(j-1)-d+j+p_d(j)-1}}\right)\\
  \nonumber+ \sum_{j=1}^{d}O\bigl( \varepsilon_d(j) \varepsilon_d(1)\bigr)\Bigg\}.
\end{multline}
Using
\begin{equation}
  \nonumber \frac{1}{s_d(1)-d+k}=\begin{cases}
  O(1)&(k<n_1=M+d)\\
  \varepsilon_d(1)^{-1}&(k=n_1=M+d),
\end{cases}
\end{equation}
we have
\begin{multline}
(-1)^{m_d}m_d! \sum_{p_1+ \cdots+p_d=d+M}\frac{B_{p_1} \cdots B_{p_d}}{p_1! \cdots p_d!}\prod_{j=2}^{d}h(-m_d(j)-d+j+p_d(j)-1,-m_d(j-1)-d+j+p_d(j)-1)\\
  \times \frac{[ \varepsilon_d(j)]_{-m_d(j)-d+j+p_d(j)-1}}{[ \varepsilon_d(j-1)]_{-m_d(j-1)-d+j+p_d(j)-1}}+\sum_{j=1}^{d}O( \varepsilon_j).
  \label{eq:1}
\end{multline}

From (\ref{eq:23}) and (\ref{eq:1}), we obtain Theorem 2.

Graduate School of Mathematics\\
Nagoya University\\
Chikusa-ku, Nagoya 464-8602, Japan\\
E-mail:\ m11022v@math.nagoya-u.ac.jp

\end{document}